\newtheorem{definition}{Definition}
\newtheorem{theorem}{Theorem}
\newtheorem{lemma}{Lemma}
\newcommand{\legendentry}[2]{%
  \begin{tikzpicture}[baseline=(legendnode.base)]
    \fill[#1] (0,0) rectangle (0.3,0.3);
    \node[right=0.1cm] (legendnode) at (0.3,0.15) {#2};
  \end{tikzpicture}%
}
\keywords{Curvature $|$ Manifold learning $|$ Tangent space estimation $|$ Exploratory data analysis}
\title{Curvature of high-dimensional data}
\author{Jiayi Chen$^1$}
\author{Mohammad Javad Latifi Jebelli$^1$}
\author{Daniel N. Rockmore$^{1,2,3}$}
\address{$^1\;$ Department of Mathematics, Dartmouth College, Hanover, NH 03755}
\address{$^2\;$Department of Computer Science, Dartmouth College, Hanover, NH 03755}
\address{$^3\;$The Santa Fe Institute, Santa Fe, NM 87501}
\begin{document}



\begin{abstract}
We consider the problem of estimating curvature  where the data can be viewed as a noisy sample from an underlying manifold. For manifolds of dimension greater than one there are multiple definitions of local curvature, each suggesting a different estimation process for a given data set. Recently, there has been progress in proving that estimates of ``local point cloud curvature" converge to the related smooth notion of local curvature as the density of the point cloud approaches infinity.  Herein we investigate practical limitations of such convergence theorems and discuss the significant impact of bias in such estimates as reported in recent literature. We provide theoretical arguments for the fact that bias increases drastically in higher dimensions, so much so that in high dimensions, the probability that a naive curvature estimate lies in a small interval near the true curvature could be near zero.  We present a probabilistic framework that enables the construction of more accurate estimators of curvature for arbitrary noise models.  The efficacy of our technique is supported with experiments on spheres of dimension as large as twelve.
\end{abstract}

\maketitle

The ``manifold hypothesis" assumes that a data cloud can be viewed as noisy samples of an underlying manifold. Examples include digital images or word embeddings from a large language model. In this context, one might attempt to estimate the manifold curvature by estimating tangent spaces at nearby points and comparing them. We show here that such curvature measurements are highly unreliable for high-dimensional data sets. Furthermore, we rigorously study the origin of this bias and seek a remedy for these shortcomings by recovering the correct curvature value through a combination of ideas from geometry and probability.
    
Curvature estimation is a part of ``data geometry", a subject of central interest in today’s world of data science 
where high-dimensional data, by which we mean feature vectors of tens, hundreds, or even thousands of dimensions are now common. The manifold hypothesis presents natural estimation questions regarding the geometric properties of the underlying  manifold, such as local dimensionality and local curvature (intrinsic or extrinsic), which are in turn viewed as properties of the data. 
    
Some standard approaches to estimating curvature for data of dimension greater than three are biased \cite{hickok2023,RiemannianCurvaturePNAS} (see below).  Our primary contribution is to remedy the bias by approaching the problem from a probabilistic point of view. By this, we mean given a noise model for the data (as noisy samples from an underlying manifold), we aim to compute the distribution of the now noisy estimate of curvature as derived from the sample. This ``pushforward" of the original noise distribution encodes detailed information about the empirical estimate. In particular, while the pushforward may converge (in sampling density) to the true curvature, it can include a form of bias that can distort -- possibly radically -- any finite estimate. 
    
    Working in the context of {\em absolute variation curvature}, a notion of curvature that measures local variation in tangent space estimates near a point (see Definition 2), we derive a closed form for the pushforward of a  von Mises-Fisher-based (vMF-based) noise model for the underlying random sample (Theorems 2) of curvature. This extends to mixtures of vMF distributions (Theorem 3).  The closed form enables us to address the bias in the pushforward in the form of a much more accurate curvature estimator in a general setting.  Since any noise model can be well-approximated by a finite mixture of vMF distributions, our result applies more generally (Theorem 4) and suggests a technique for mitigating bias in other curvature calculations that depend on local estimates of tangent space (and the second fundamental form). We demonstrate our approach in a series of experiments on spheres  $S^3,S^5,S^{10},$ and $S^{12}$ with radii of one and two. Although the form of our results depends on working with a codimension one embedding, our technique suggests a natural extension to the setting of arbitrary codimension.

    \parshape=0 
    
\subsection*{Background and Related Work}

The data science setting requires that there be some kind of initial data. In some cases the data is indirect, in the sense that all that one may have is a similarity or dissimilarity matrix, the latter of which can be transformed to a distance matrix. A more standard setting is explicit data in the form of feature vectors or coordinates, and thus, an embedding of the measured phenomenon of interest, whose geometric structure awaits discovery. The wealth of applications to image and volume analysis have motivated a good deal of study of curvature in the contexts of dimensions two and three. Our interest is in arbitrary -- and higher -- dimensions. 

Our work is situated in the growing body of work that aims at understanding local geometric structure in high-dimensional data sets.  To that, first order structure is equivalent to the determination of local dimensionality and is achieved via various approaches to local PCA (see e.g., \cite{LocalDim2-10.1145/2783258.2783405, LocalDim1-1671801,LocalDim3-10.1145/1102351.1102388}). This is effectively the estimation of the tangent space at a point. Local curvature gets at second order information. In higher dimensions there is no unique measure of curvature. First off it is of either an {\em extrinsic} or {\em intrinsic} character. They can be related computationally, but they are different.

   {\em  Extrinsic curvature} gives a measure of local deviation of the tangent space, in essence asking how the orientation of the tangent space changes near a point. This is effectively giving a local second order description (the tangent space is a first order description) of the manifold. This definition requires an embedding of the manifold in some ambient Euclidean space and is the setting for most of data science where we start with high-dimensional data, an implicit embedding. Our measure of absolute variation curvature is extrinsic (cf. Section~\ref{sec:curv}).  The local second order description encoded in curvature relates this work to the problem of finding good local second order fits of data that knit  together the local first order fits. This problem is addressed in \cite{GilbertONeill}.  More generally, the {\em second fundamental form} is a tensor that encodes all of the second order local behavior and as we will show (cf. Section~\ref{sec:Discussion}), our work can extended to this more complete description.  
    
{\em Intrinsic curvature} is a notion of curvature that is independent of embedding and solely relies on the metric structure.  A detailed explanation of this topic is beyond the scope of this paper, but there are many good sources for those interested in a thorough understanding (see e.g., \cite{Lee}). One method  is manifested  in a consideration of how the volume of a ball (restricted to the manifold) centered at a point of interest changes as the radius grows. This can even depend on the direction of growth. For example, {\em scalar curvature}  at a point is a measure  recording a comparison of the volumes of a ball of a given radius confined to the manifold (and thus determined using the metric of the manifold) and the ball of the same radius in the Euclidean space of the same dimension (see e.g., \cite{Ache2014RicciCA}). Positive curvature indicates less volume and negative curvature indicates greater volume. (In two dimensions, compare the areas of disks of a given radius confined to a sphere, a plane, and a saddle.)

The papers \cite{hickok2023} and \cite{RiemannianCurvaturePNAS}  give two approaches to the calculation of scalar curvature.   In \cite{hickok2023}  this is accomplished   using only a distance matrix as the original data. They show that their method recovers scalar curvature in the limit with increasing density of samples. They also provide experiments recovering known scalar curvature from noisy samples of spheres in a range of dimensions. The approach detailed in \cite{RiemannianCurvaturePNAS} connects the computation of scalar curvature to a calculation of extrinsic curvature. Included there are many numerical experiments, both to data where there is some ground truth (spheres of a range of dimensions, a Klein bottle, as well as image patch data) as well as a  cellular RNA expression data. 

{\em Ricci curvature} is another intrinsic curvature measure derived from the relative (with respect to Euclidean space) asymptotic growth of balls centered on the point of interest. The papers  \cite{Ache2014RicciCA} and \cite{trillos2024} give algorithms for its estimation from data sampled from the underlying manifold. The latter relates it  {\em Ollivier-Ricci curvature} which is derived from a measure comparing nearby random walks on the manifold \cite{OLLIVIER2009810}. This is in turn related to {\em diffusion curvature} \cite{DiffusionCurvatureNIPS}  which defines curvature (at a given point $x$) in terms of the probability that a random walk stays within a given volume after a fixed number of steps, a formalism that comes from the foundational work that introduced the notion of {\em diffusion maps} to high-dimensional data analysis \cite{coifman2006diffusion}.

    The authors in \cite{hickok2023} and \cite{trillos2024} provide convergence analysis to show that in the limit of infinite sampling density they can recover the continuous notions of scalar and Ricci curvature respectively. Also, see \cite{Aamari2019}, \cite{OLLIVIER2009810} and \cite{Zhang2025}.  \\

\subsection*{Bias reports in the literature} Convergence in probability (and in the presence of noise) does not guarantee anything about the bias in such curvature estimations (in the presence of noise) and despite remarkable progress, it is not clear if the curvature value computed using a finite density of points is a good representative for the true curvature. In particular, we find in the current literature examples of systematic bias in the estimates, acknowledged by the researchers, but unaddressed. 

First, consider \cite{RiemannianCurvaturePNAS}. Therein,  Figure S2 of their associated Supporting Information displays the results of estimating scalar curvature  using uniformly distributed points on $S^2$ embedded in Euclidean spaces over a range of dimensions and convolved with Gaussian noise over a range of standard deviations. We find in these figures a bias in curvature estimation that increases  with variance in the noise model and the dimension of the embedding.\footnote{In one set of experiments, the embedding is fixed at $\mathbb{R}^5$ while $10k$ points are sampled for Gaussian noise with $\sigma \in \{0.001,0.003,0.01,.03, 0.1, 0.3\}$, while in another set, the ambient dimension ranges over $\{3,10,20,30,40,60, 80, 100\}$ for $\sigma\in\{0.01, 0.03, 0.05\}$.}. In the words of the authors, ``The nonzero value of the mean error indicates that our estimator is biased," which they attribute to the dependence on a nonlinear function of the values comprising the second fundamental form (cf. \cite{RiemannianCurvaturePNAS}, Eq. (5)). 

    As a second example, we consider \cite{hickok2023}. Therein, we see in Figure 5, highly biased estimates of scalar curvature, using four different numerical techniques kernel estimation from $10k$ uniformly distributed points from $S^2 \subset \mathbb{R}^3$, convolved with Gaussian noise for $\sigma \in \{0.001,0.003,0.01, 0.03\}$. 

We argue in this paper that the bias is intrinsic to the methods of estimation and show, using a probabilistic framework and approach, a way to mitigate this bias. This claim is supported by extensive numerical experiments on high-dimensional spheres.

\subsection*{Organization} 
We begin with a general formulation of the problem, which is to recover a measure of local (extrinsic) curvature, estimated from noisy data sampled from a manifold $\mathcal{M} \subset \mathbb{R}^n$ of dimension $k < n$. While there are various notions of curvature (e.g., diffusion curvature, mean curvature, etc.). We set ourselves the goal of estimating a quantity we call the {\em absolute variation curvature}, $\omega$, which measures the variation in relative tangent space orientation near a given point (cf., Section~\ref{sec:curv}). In the general setting (presented in Section \ref{sec:prob_calib}) this leads one to consider a PCA-based tangent space approximation as a Grassmannian-valued random variable, $X$ and compute the push-forward of this measure under the curvature computation. In the case of co-dimension one, the situation simplifies nicely as any tangent space approximation is equivalent to the estimation of a normal vector. 

Section~\ref{sec:prob_decoding_avc} is devoted to the detailed study of the probabilistic framework as applied to absolute variation curvature. We find an explicit expression for the distribution of absolute variation curvature, $\Omega$, a random variable that contains the true curvature $\omega$ as a distribution parameter. In Section~\ref{sec:experiments} we use the theoretical results of Section 4 to develop a maximum likelihood method to estimate $\omega$ from random observations of $\Omega$ and provide numerical experiments. Here, $\Omega$ is the naive (biased) curvature obtained from curvature formula and $\omega$ is the true curvature taking into account the effect of noisy samples. Our techniques avoids the inherent bias that has been observed by others that increases with dimensionality. While we are working in the specific context of the  absolute variation curvature, it, like other curvature measures can be related back to the second fundamental form and the shape operator so that in principle our general framework can be applied to other ``derivative" forms of curvature.  This, and suggestions for future work are addressed in our closing Discussion (Section~\ref{sec:Discussion}).  \\

\section{Probabilistic Decoding of Curvature -- The General Case}\label{sec:prob_calib}

\subsection{The Abstract Framework}
 We demonstrate the general idea of our approach in an abstract setting that in principle  can be applied  to any curvature estimation method. We later specialize this to tangent space techniques. Given $\mathcal{M} \subset \mathbb{R}^n$ let $\mathcal{X}(\mathcal{M})\subset \mathbb{R}^n$  denote a finite set of points that manifest a noisy sample of points about $\mathcal{M}$. One can either think of this as generated by a random process (given $\mathcal{M}$) or as a point cloud with an assumed underlying manifold structure (i.e., as satisfying the manifold hypothesis). We write $\mathcal{X}$ for $\mathcal{X}(\mathcal{M})$ when $\mathcal{M}$ is understood. For fixed $x \in\mathcal{M}\subset \mathbb{R}^n$, let $\Omega$ denote a general expression that estimates an explicit notion of curvature at $x$ using points in $\mathcal{X}$ ($\Omega$ is a function of in a neighborhood of $x$ in $\mathcal{X}$ but we omit this dependence to simplify the notation). We refer to $\Omega$ as the \textbf{naive curvature} at $x$. We use the term ``naive" for such curvature estimation because it ignores the effect of density and noise of points in $\mathcal{X}$ on the accuracy of the estimation. A crucial observation is that in general, for a point cloud assumed to be sampled from $\mathcal{M}$ with ambient noise, $\Omega$ might be a biased  estimator for curvature of the underlying manifold. 

We argue that one should consider the expression of $\Omega$  as a random variable depending on a random realization of $\mathcal{X}$. Further, we assume  $\Omega ~\sim \mathcal{R}(\eta, \omega)$  depends on two independent parameters $\eta$ and $\omega$. The first (possibly vector) parameter $\eta$ depends on the random process that models the distribution of the point cloud $\mathcal{X}$. The  parameter $\omega$ is  the true curvature value at $x$. The value of $\omega$ can be estimated from some noisy samples of $\Omega$ (this formulates the estimate of $\omega$ as the solution to an inverse problem). We  call this $\omega$ the \textbf{decoded curvature estimate}. 

Hence, the big picture of our estimation framework is as follows:  (1)  Incorporate  prior knowledge of point cloud density and ambient noise to estimate $\eta$;  (2) Use samples of $\Omega$ (the naive curvature calculations) to estimate the only remaining parameter $\omega$. Steps (1) and (2) then produce a distribution for $\mathcal{R}(\eta, \omega)$ from which one can derive an estimate for $\omega$. This is completely general. We now   provide a detailed explanation for curvature estimation techniques that rely on tangent spaces.   \\

\noindent \textbf{Naive curvature using tangent space methods.} Consider a manifold $\mathcal{M}$ embedded in $\mathbb{R}^n$. A notion of curvature quantifies the variation of tangent spaces   in an infinitesimal neighborhood of a fixed base point. Given a point cloud of data $\mathcal{X}$ produced as noisy samples from (and near)  $\mathcal{M}$, and given a fixed point $x \in \mathcal{M}$, we consider a collection of points $x_1, \dots, x_q \in \mathcal{X} \subset \mathbb{R}^n$ in a small neighborhood of $x$. For each $x_j$, one can compute an approximation to the tangent space at $x_j$ using a local regression or PCA technique. An estimate for the curvature at $x$ is then given as an expression in terms of all these tangent space estimates. For instance, one could choose a particular direction on the manifold and observe how the tangent space changes as we step in that direction. Another approach is to measure the angles between the  tangent space at the center point $x$ with each of these nearby tangent spaces. This will lead to the notion of absolute variation curvature as discussed in the next section.  \\


\noindent \textbf{Decoded curvature for tangent space methods}. We treat the tangent space approximation as a Grassmannian valued random-variable. (The Grassmanian is the vector space of $m$-dimensional subspaces -- for some fixed $m,n$ -- of an $n$-dimensional vectors space.) As a result, the computational steps to arrive at a value for curvature correspond to transformations applied to these random variables. The result is a random variable which encodes the true curvature as a parameter. Let us go over this construction step by step. \\

    \textbf{Step 1.  Identify a probability distribution for tangent space estimations}. Let $X_i$  represent the tangent space estimation at $x_i$   treated as a Grassmannian valued random variable. In the case of manifolds of codimension one in $\mathbb{R}^n$, $X_i$ is a random normal vector on the unit sphere. A natural model is to assume that $X_i  \sim \mathcal{T}(\kappa)$, a von-Mises-Fisher distribution (vMF) with parameter $\kappa$. The choice of parameter depends on the density of the points in $\mathcal{X}$  and the intensity of ambient noise, and the value of $\kappa$ is calculated prior to any curvature estimation. For a fixed density and noise level we set up numerical experiments to estimate the correct parameter value for $\kappa$. One can also employ a theoretical framework to establish the true probability distribution for $X_i$ but in this paper we use mixtures of  von-Mises-Fisher distribution to approximate the true distribution of $X_i$.   \\
    
    \textbf{Step 2. Compute the push-forward of tangent random variables under naive curvature formula}.  In  the naive curvature calculation we would take  random variables $X_1, \dots, X_q$ as input and produce a random variable $\Omega$ (that depends on the tangent space estimates at each point) as output. We now need to calculate the pushforward of the input random variables to obtain an explicit family of probability distributions that accounts for a depends on both $\eta$ (this is $\kappa$ in the vMF case) and the true curvature  $\omega$ as parameters. In general the calculation of the pushforward can be difficult (even if it is always numerically approachable), but in the case of absolute variation curvature and a vMF noise  model we are able to give an explicit form for the push-forward (cf., Theorem~\ref{thm:curv_pushforward}).   \\

\textbf{Step 3. Obtain samples for $\Omega$ and estimate $\omega$}. This last step is effectively an inverse problem. Given that naive curvature calculations provide samples for random variable $\Omega$, we can solve the inverse problem of finding $\omega$ that best explains these samples. Given the fact that we know the probability distribution of $\Omega \sim \mathcal{R}(\kappa, \omega)$ this is simply a maximum likelihood estimation problem for recovering $\omega$ (recall that $\kappa$ is already known). \\

The main technical difficulty in executing above idea is to provide an expression for  the distribution for $\mathcal{R}(\kappa, \omega )$. For instance, such a distribution is highly non-trivial in the case of diffusion curvature (see \cite{DiffusionCurvatureNIPS}). In this manuscript, we will work in the context of absolute variation curvature (cf. Section~\ref{sec:curv}) for which we can provide an explicit expression for the corresponding family of distributions (cf. Theorem~\ref{thm:curv_pushforward}). \\

\section{Curvature}\label{sec:curv}

One common way to define the curvature of a differentiable curve is to use the reciprocal of the radius of the osculating circle that best approximates the curve at a point on the curve. We start with the simple case of $\mathcal{M} = S^m_r \subset \mathbb{R}^{m+1}$, the $m$-dimensional sphere of radius $r>0$: given two points $x,y \in \mathcal{M} = S^m_r \subset \mathbb{R}^{m+1}$, we note
$$
\frac{1}{r} = \frac{2 \sin (\frac{\theta}{2})}{\| x - y\|_{\mathbb{R}^{m+1}}}
$$
where $\theta$ is the angle between the tangent spaces $T_x \mathcal{M}$ and $T_{y} \mathcal{M}$, and $\|\cdot \|_{\mathbb{R}^{m+1}}$ is the Euclidean distance. Thus, more generally for manifolds embedded in $\mathbb{R}^n$, this suggests that curvature can be expressed as ``change" in tangent spaces as we move the base point. For a general submanifold of dimension $m<n$, the tangent spaces are identified with points on a Grassmannian $Gr(m,n)$. Based on these observations, we define a discrete notion of curvature as follows.  \\

\begin{definition}
    Let $ \mathcal{M}\subset \mathbb{R}^n$ be a smooth manifold of dimension $m$. For $x , y \in  \mathcal{M}$ in a local neighborhood, we define the discrete variation curvature at $x$ in the direction of $y$ by 
    $$\Omega_y(x) = \frac{ 2 \sin(\frac{\theta (x,y)}{2})}{\| x - y\|_{\mathbb{R}^n}}$$
where $\theta (x,y)$ 
is the angle between tangent spaces at $x$ and $y$. Alternatively, we can think of $\theta$ in terms of a distance function defined on the Grassmannian $Gr(m,n)$ of $m$-dimensional linear subspaces of the $n$-dimensional space. 
\end{definition}

In the case of manifolds with codimension one, the value of $\Omega_y(x)$ can be expressed in terms of the unit normal vectors at $x$ and $y$, $\textbf{n}(x)$ and $\textbf{n}(y)$ via 
 $$
 \Omega_y(x)= \frac{2 \sin (\arccos(\textbf{n}(x)\cdot \textbf{n}(y))/2)}{|| x-y ||}.
 $$

We are interested in the case of curvature estimation from a noisy sample of points on the manifold. To that end, we work in a probabilistic framework. Let $x \in \mathcal{M}$ be a point in $\mathcal{X}$ with the tangent space $T_x \mathcal{M}$ and let $y \in \mathcal{X}$ be a nearby point. We treat the estimated tangent space at $y$ as a random variable $X$ that depends on the random realization of point cloud $\mathcal{X}$ (for now we ignore the randomness in estimating $T_{x}\mathcal{M}$). Hence, for a general submanifold of dimension $m$ in $\mathbb{R}^n$, $T_{y}\mathcal{M}$ is a random point in the Grassmannian $Gr(m,n)$. \\

Let $ \mathcal{M}$ be an embedded submanifold of $\mathbb{R}^{m+1}$ of codimension 1, i.e. $ dim (\mathcal{M}) = m$. The absolute variation curvature is defined as follows,

\begin{definition}
 The \textbf{absolute variation curvature} $\omega = \omega(x)$, at $x\in  \mathcal{M}$ is defined as the mean value of $\Omega_y(x)$ when the average is taken over all $y$'s in an infinitesimal sphere around $x$. More precisely, 
    \begin{equation}\label{eq:absvc}
        \omega  = \lim_{\epsilon \rightarrow 0 } \frac{1}{|A_{\epsilon}|} \int_{A_{\epsilon}}\Omega_y(x)\, d\lambda(y), \quad \quad A_{\epsilon} = \partial B_\epsilon(x)
    \end{equation}
    
\end{definition}

Intuitively, this takes into account the average changes of tangent spaces near $x$, for any given tangent directions. See Figure~\ref{fig:curve_tangents} for an illustration of the situation for the unit circle in $\mathbb{R}^2$. In the SI, we show how to express $\omega$ in terms of more familiar geometric quantities such as shape operator and second fundamental form ($\omega$ depends on the eigenvalues of the shape operator, $\mathcal{L}_x$). In principle, the techniques presented in this manuscript can be used to estimate various components of the second fundamental form, though we will focus specifically on $\omega$. \\

The absolute variation curvature depends on the embedding of $\mathcal{M}$ in the Euclidean space. In the context of data science, one can argue that extrinsic notions of curvature, such as $\omega $, are significant. For instance, a noisy point cloud produced by a cylinder in $\mathbb{R}^3$ exhibits an apparent curvature that is captured by $\omega $, despite the surface being intrinsically flat. 


\setlength{\intextsep}{2pt}
\setlength{\textfloatsep}{2pt}
\captionsetup[subfigure]{aboveskip=0pt,belowskip=0pt}

\begin{figure}[H] 
 \begin{subfigure}[b]{0.5\textwidth}
    \centering
    \includegraphics[width=\textwidth]{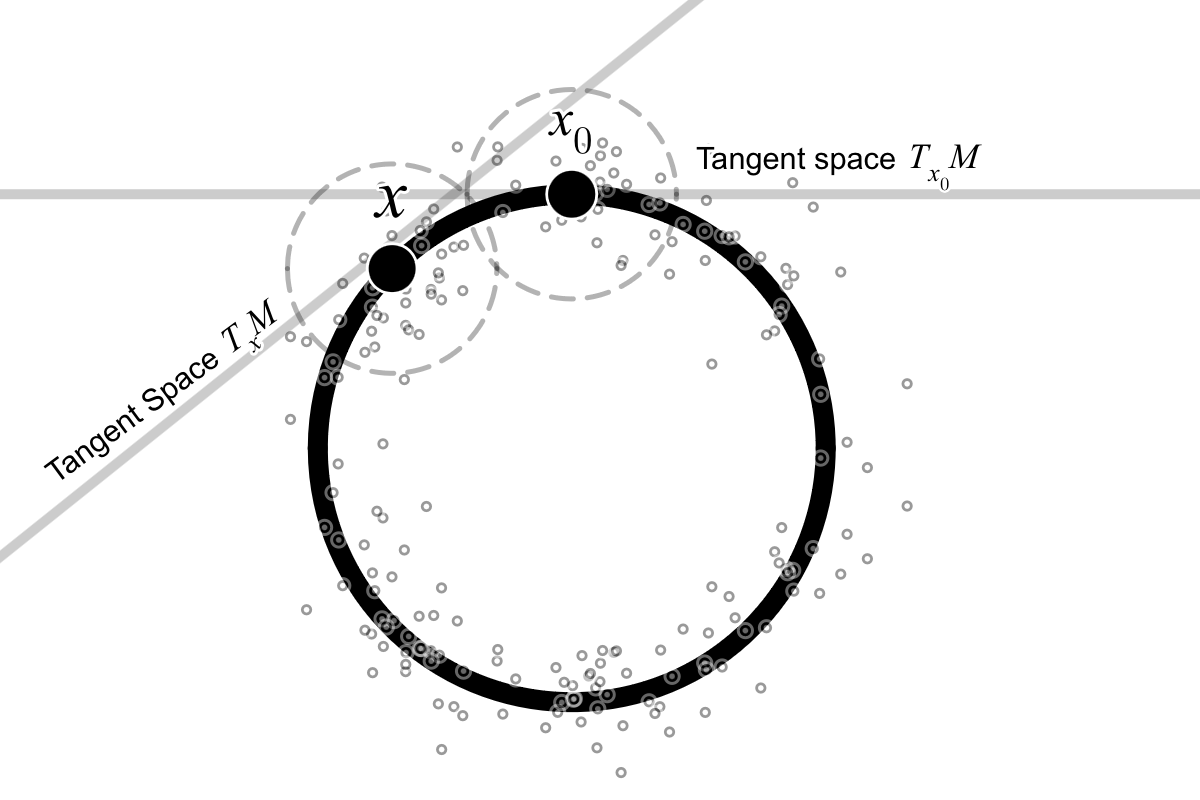}
    \caption{Tangent spaces of two nearby points on a circle. The large circle represent the manifold. The angle $\theta$ between tangent spaces $T_x  \mathcal{M}$ and $T_{x_0} \mathcal{M}$, at $x $ and $x_0$, encodes an approximation of curvature data at $x_0$. In the context of data science, these tangent spaces are estimated using a point cloud of data.}
    \end{subfigure} 
    \hfill
 \begin{subfigure}[b]{0.5\textwidth}
 \centering
    \includegraphics[width=\textwidth]{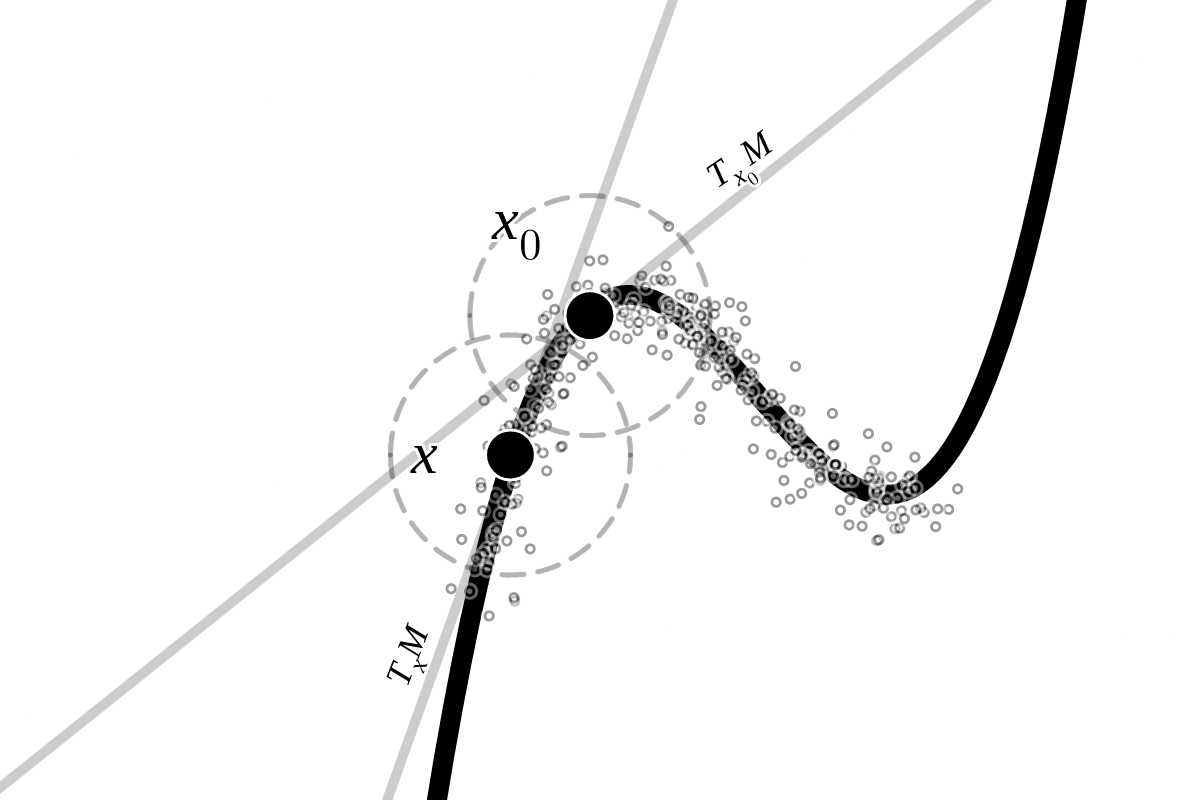}
\caption{Tangent spaces of two nearby points on a general curve (manifold).}
\end{subfigure}
    \caption{Visualizations of noisy manifolds. }
     \label{fig:curve_tangents}
\end{figure}

Our goal is to  estimate the absolute variation curvature from a noisy point cloud. Proceeding with our first computational experiment, Figure~\ref{fig:naive_bias} reveals an inherent bias when computing the naive curvature from a noisy point cloud.  In the underlying numerical experiment, we construct multiple noisy point clouds by uniformly sampling points from $n$-dimensional sphere of of radius $r>0$ embedding in $\mathbb{R}^{n}$, $S_r^{n-1} \subset \mathbb{R}^n$, and add ambient $n$-dimensional Gaussian noise to each data point independently. For each point cloud constructed in this way, we find estimates for the absolute variation curvature using tangent space approximations (as described above). Figure ~\ref{fig:naive_bias} shows the histogram of these calculated curvature values. As we increase the dimension, we observe a stronger bias in the observed values of $\omega$ such that even the mean (or mode) of the calculated curvature values is not a good representative of the true curvature in higher dimensions. The same is true if we increase the noise level (cf. Figure 
in the SI and Figure~\ref{fig:cur_noisy_r=2}). In Section ~\ref{sec:prob_decoding_avc} we incorporate a probabilistic framework to understand the effect noisy point cloud in tangent space estimations, and consequently, on the estimated curvature and use that  in decoded curvature estimation in order to remediate the bias. 

\begin{figure}[H]
    \centering
    \includegraphics[width=0.6\linewidth]{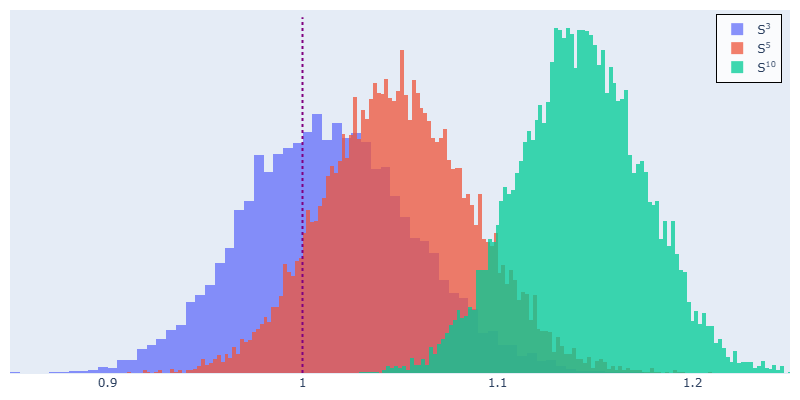}
    \caption{Histograms of curvature estimates computed from 50,000 random samples of points on spheres $S^3$, $S^5$, and $S^{10}$ with radius 1, using parameters shown in Table S1 (in the SI). The plots illustrate how bias in the naive calculation of absolute variation curvature increases with dimension. The dash line represents the true value of curvature for all cases.}
    \label{fig:naive_bias}
\end{figure}

\section{Probabilistic Decoding of Absolute Variation Curvature - Case of Codimension One}\label{sec:prob_decoding_avc}

Recall that absolute variation curvature is defined in terms of tangent spaces (points in Grassmanians), and requires that we estimate tangent spaces near a point $x_0\in \mathcal{M}$ (see Section~\ref{sec:prob_calib}). We focus here on  the case of codimension one, in which these tangent spaces are represented by unit normal vectors in $S^m$. The general situation is approachable, but there are subtleties that come with codimension greater than one. We discuss those in Section~\ref{sec:Discussion}.  

In the setting of codimension one the change in (noisy) tangent space approximation is thus measured in terms of the angle between normal vectors. With that, the general schema presented in Section~\ref{sec:prob_calib} to decode the absolute variation curvature is broken up into two parts: 

\begin{enumerate}
    \item \textbf{Calculation of the pushforward for the noisy angle computation}: Consider two tangent spaces with angle $\alpha$ relative to each other. I.e. $\alpha$ is the angle between the corresponding normal vectors. If we add vMF noise to the normal vectors, what is the distribution of the estimated angle? How does this distribution represent the true $\alpha$? 

    \item \textbf{Calculation of the pushforward for noisy curvature computation}: For this, we ask the question, if given a point on $\mathcal{M}$ with known (local) absolute variation curvature (e.g., the sphere $S_r^m$ of radius $r$ embedded in $\mathbb{R}^{m+1}$ in the usual way as a model space). If we add vMF noise to the normal vectors and compute the discrete approximation of absolute variation curvature, what is the distribution of this resulting value? How does this distribution represent the true absolute variation curvature? 

\end{enumerate}


Let $X$ be the random variable representing the tangent space estimation at $x\in \mathcal{M}$, and let $\Theta$ be a random variable associated with the estimated angle between two nearby tangent spaces. Note that in our context (codimension one)  $X$ is a distribution on $S^m$. The distribution of 
$X$ depends on the density of point cloud together with intensity of ambient noise. The distribution of $\Theta$ on the other hand depends on the distribution of $X$ together with the local distribution of curvature of $\mathcal{M}$. 

Initially, we assume that 
$X$ is distributed according to a  von Mises-Fisher distribution (vMF) with the density 
    \[     f_X(x) = c(\kappa) \exp\left( \kappa  \mu^T x  \right), \qquad c(\kappa) = \frac{\kappa^{(m+1)/2-1}}{(2\pi)^{(m+1)/2} I_{(m+1)/2-1}(\kappa)} \] 
where $\mu$ and $\kappa$ are the mean and concentration parameters\footnote{See https://en.wikipedia.org/wiki/Von\_Mises-Fisher\_distribution.} respectively, and $I_k$ is modified Bessel function of the first kind. As we will see, the we will be able to use a mixture of vMF distributions to incorporate more general noise models. The following theorem shows how a vMF noise in tangent space estimation transforms to a noisy estimation of angle between tangent spaces.

\begin{theorem}\label{thm:anglepushforward}
(Noise pushforward for angle computation) Let $\mu_0,\mu \in S^m$, and let $\alpha$ be the angle between $\mu_0$ and $\mu$. Let $X$ be a $S^m$-valued random variable distributed according to the  von Mises-Fisher distribution with (Frechet) mean of $\mu$ and concentration parameter $\kappa>0$.  Then, the random variable $\Theta$ defined as the angle between $\mu_0$ and $X$ has the probability density
  $$
        f_\Theta(\theta) = 2\sin^{m-1}(\theta) \cosh{(\kappa\cos(\alpha)\cos(\theta))} \frac{I_{\frac{m-2}{2}}(\kappa \sin(\alpha) \sin(\theta))}{(\kappa \sin(\alpha) \sin(\theta))^{\frac{m-2}{2}}} \cdot \frac{\kappa^{\frac{m-1}{2}}}{\sqrt{2\pi} \cdot I_{\frac{m-1}{2}}(\kappa)}
    $$
     where $\theta \in [0,\pi]$ and $I_k$ is the modified Bessel function of the first kind.
    
\end{theorem}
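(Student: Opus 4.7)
The plan is to compute the pushforward of the vMF density under $X\mapsto\arccos(\mu_0^T X)$ by setting up spherical coordinates on $S^m$ whose pole is $\mu_0$, so that $\Theta$ itself is a coordinate, and then marginalizing out the azimuthal direction via a standard Bessel integral on $S^{m-1}$. First rotate so that $\mu_0=e_0$ is the standard pole and $\mu=(\cos\alpha)e_0+(\sin\alpha)v_0$ for a fixed unit vector $v_0\perp e_0$. Writing $X=(\cos\Theta)e_0+(\sin\Theta)U$ with $U\in S^{m-1}\subset e_0^\perp$ and $\Theta\in[0,\pi]$, the round volume element on $S^m$ factors as $\sin^{m-1}(\Theta)\,d\Theta\,d\sigma_{S^{m-1}}(U)$, and $\mu^T X=\cos\alpha\cos\Theta+\sin\alpha\sin\Theta\,(v_0^T U)$.

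In these coordinates the vMF density is $c(\kappa)\sin^{m-1}(\Theta)\,e^{\kappa\cos\alpha\cos\Theta}\,e^{\kappa\sin\alpha\sin\Theta\,(v_0^T U)}$, with only the last factor depending on $U$. The core computation is the azimuthal integral
\[
\int_{S^{n-1}} e^{a(v^T u)}\,d\sigma(u) \;=\; (2\pi)^{n/2}\,a^{-(n/2-1)}\,I_{n/2-1}(a),
\]
which I would verify by placing $v$ at the pole of $S^{n-1}$, reducing the integral to a one-dimensional integral of $e^{a\cos\phi}\sin^{n-2}(\phi)\,d\phi$, and recognizing the classical integral representation of the modified Bessel function. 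Applied with $n=m$ and $a=\kappa\sin\alpha\sin\Theta$, this produces the factor $I_{(m-2)/2}(\kappa\sin\alpha\sin\Theta)/(\kappa\sin\alpha\sin\Theta)^{(m-2)/2}$ together with a power of $2\pi$. Combining with the normalizing constant $c(\kappa)=\kappa^{(m-1)/2}/[(2\pi)^{(m+1)/2}I_{(m-1)/2}(\kappa)]$ collapses the $(2\pi)$ powers to $1/\sqrt{2\pi}$, yielding the prefactor $\kappa^{(m-1)/2}/[\sqrt{2\pi}\,I_{(m-1)/2}(\kappa)]$ displayed in the theorem.

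One step in reconciling with the stated form requires explicit comment: the bare pushforward gives the single exponential $e^{\kappa\cos\alpha\cos\Theta}$, while the theorem displays $2\cosh(\kappa\cos\alpha\cos\Theta)=e^{\kappa\cos\alpha\cos\Theta}+e^{-\kappa\cos\alpha\cos\Theta}$. These two expressions agree precisely after the involution $\Theta\mapsto\pi-\Theta$, equivalently $X\mapsto -X$; this is the identification of a unit normal with its reflection, which is the appropriate convention in the paper's setup (Section~\ref{sec:prob_calib}), where $X$ represents a tangent hyperplane by its unit normal and is therefore defined only up to sign. The write-up would incorporate this identification as part of the derivation so that the final density is expressed on the unoriented-angle domain in the stated form. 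The main technical obstacle is the azimuthal Bessel integral above; the remaining work is careful bookkeeping of normalizing constants and of the sign-folding convention.
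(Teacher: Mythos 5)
Your proposal is correct and follows essentially the same route as the paper: spherical coordinates polar to $\mu_0$, marginalization of the azimuthal directions via the classical Bessel integral representation (the paper just unrolls your single $S^{m-1}$ integral into the iterated $\phi_1,\dots,\phi_{m-1}$ integrals), and the same folding $f_\Theta(\theta)=f_{\tilde\Theta}(\theta)+f_{\tilde\Theta}(\pi-\theta)$ to produce the $2\cosh$ factor, which the paper likewise justifies by taking $\Theta=\arccos(|X\cdot\mu_0|)$, i.e.\ the normal defined only up to sign. No gaps.
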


\begin{proof}
    First, we define a coordinate system on $S^m \subset \mathbb{R}^{m+1}$ relative to the unit vector $\mu_0$. Let $e_1, \dots, e_{m+1}$ be an orthonormal basis for $\mathbb{R}^{m+1}$ such that $\mu_0 = e_{m+1}$. This leads to a coordinate representation of $S^m$ with $x=(x_1, \dots, x_{m+1})=x_1 e_1 + \dots + x_{m+1}e_{m+1}$ where $x_j = \langle x, e_j \rangle$ and $\sum_j x_j^2 = 1 $. Let $\phi_0 \in [0,\pi]$ the angle between $x\in S^m$ and $e_{m+1}$ (i.e. $\mu_0$) and extend this to an spherical coordinate on $S^m$ by introducing $\phi_1, \dots, \phi_{m-1}$ relative to above orthonormal basis. We write the coordinate transformation map as $(\phi_0, \dots, \phi_{m-1}) = \Phi(x_1, \dots, x_{m+1})$ and $ = (x_1, \dots, x_{m+1})=\Phi(\phi_0, \dots, \phi_{m-1})$. Given such coordinate system we have $x_{m+1} = \arccos{\phi_0}$, and the relation between the two coordinate systems is given by 
    \begin{align*}
        x_{m+1} =& \, \cos(\phi_0)\\
        x_m =&  \,\sin(\phi_0) \cos(\phi_1) \\
        x_{m-1} =&  \,\sin(\phi_0) \sin(\phi_1) \cos(\phi_2) \\
        &\vdots \\
        x_2 = & \, \sin(\phi_0) \dots \sin(\phi_{m-2}) \cos(\phi_{m-1}) \\ 
        x_1 = & \, \sin(\phi_0) \dots \sin(\phi_{m-2}) \sin(\phi_{m-1}) 
    \end{align*} 
Note that $\phi_0, \dots, \phi_{m-2}$ all take values in $[0,\pi]$ while $\phi_{m-1}$ takes value in $[0,2\pi)$. The vector $\mu$ assumed to have angle $\theta$ with $\mu_0$, and hence by a symmetry argument we can assume that $\mu$ is given by $\phi_0 = \theta, \phi_1=0, \dots \phi_{m-1}=0$ and in Euclidean coordinates
\begin{equation}\label{eq:murep}
    \mu=(0,\dots, 0, \sin(\alpha), \cos(\alpha))^T
\end{equation}
   The random estimate of the angle $\theta$ is given by $\Theta = \cos^{-1}(\left| X\cdot \mu_0 \right|) \in [0,\pi)$ but we begin by finding a probability density for $\hat{\Theta} = \cos^{-1}( X\cdot \mu_0 ) \in [0,2\pi)$ where $X$ follows a  von Mises-Fisher (vMF) distribution. To achieve this, we transform the vMF probability density 
    $$
    f_X(x) = c(\kappa) e^{\kappa \langle \mu , x \rangle}
    $$
    into our spherical coordinate. Since $\hat{\Theta}$ only depends on $\phi_0$ coordinate, the probability density $f_{\hat{\Theta}}$ in the spherical coordinate is calculated via the marginals 
     \begin{equation}
            f_{\Tilde{\Theta}}(\theta) = \int_0^{2\pi} \int_0^{\pi} \cdots \int_0^{\pi} f_X\left(\Phi^{-1}(\theta, \phi_1, \dots, \phi_{m-1})\right) \; \left| \det J_{\Phi}(\theta, \phi_1, \dots, \phi_{m-1})\right| \, d\phi_1 \cdots d\phi_{m-1}
            \label{eq:theta_density_1}
        \end{equation}
    where $J$ denotes the Jacobian associated with spherical coordinate systems, 
       \begin{equation}
            \Big| \det(J_{\Phi}(\theta, \phi_1, \dots, \phi_{m-1}))\Big| = \sin^{m-1}(\theta)\cdot\sin^{m-2}(\phi_1) \cdots \sin(\phi_{m-2})
            \label{eq:jacobian}
        \end{equation}
    Using (\ref{eq:murep}), $\langle \mu, x \rangle$ is expressed as $\left(\cos(\alpha) \cos(\theta) + \sin(\alpha) \sin(\theta) \cos(\phi_1) \right)$ in spherical coordinate, and from \eqref{eq:jacobian} and\eqref{eq:theta_density_1} we have

    \begin{align}
        f_{\Tilde{\Theta}}(\theta)
        &= \int_0^{2\pi} \int_0^{\pi} \cdots \int_0^{\pi} c(\kappa) \sin^{m-1}(\theta)\cdot\sin^{m-2}(\phi_1) \cdots \sin(\phi_{m-2}) \notag\\
        &\qquad \cdot \exp{\left(\kappa(\cos(\alpha) \cos(\theta) + \sin(\alpha) \sin(\theta) \cos(\phi_1)) \right)} d\phi_1 \cdots d\phi_{m-1} \notag\\
        &= c(\kappa) \sin^{m-1}(\theta) \exp{(\kappa\cos(\alpha)\cos(\theta))} \notag\\
        & \qquad \cdot \int_0^{2\pi} \int_0^{\pi} \cdots \int_0^{\pi} \sin^{m-2}(\phi_1) \cdots \sin(\phi_{m-2})\exp{(\kappa \sin(\alpha) \sin(\theta) \cos(\phi_1))} d\phi_1 \cdots d\phi_{m-1} \notag\\
        &= c(\kappa) \sin^{m-1}(\theta) \exp{(\kappa\cos(\alpha)\cos(\theta))} \notag\\
        & \qquad \cdot \left( \int_0^{\pi} \sin^{m-2}(\phi_1)\exp{(\kappa \sin(\alpha) \sin(\theta) \cos(\phi_1))} d\phi_1\right)\notag\\
        & \qquad \cdot \left(\int_0^{\pi} \sin^{m-3}(\phi_2) d\phi_2\right) \cdots \left(\int_0^{\pi} \sin(\phi_{m-2}) d\phi_{m-2}\right) \left(\int_0^{2\pi} d\phi_{m-1}\right) \label{eq:theta_density_2}
    \end{align}
    To evaluate these integrals we use the following known integration formulas    
   \begin{equation}
       \int_0^{\pi} \sin^{m-1-j}(\phi_{j}) d\phi_{j} = \frac{\sqrt{\pi} \Gamma(\frac{m-j}{2})}{\Gamma(\frac{m-j+1}{2})}
   \end{equation} 

   \begin{equation}
       I_v(z) = \frac{(\frac{1}{2}z)^v}{\sqrt{\pi} \Gamma(v+\frac{1}{2})} \int_0^\pi \exp(\pm z\cos(\theta))(\sin(\theta))^{2v} d\theta
   \end{equation}
    leading to 
    $$\prod_{j=2}^{m-2} \int_0^{\pi} \sin^{m-1-j}(\phi_{j}) d\phi_{j} = \prod_{j=2}^{m-2} \frac{\sqrt{\pi} \Gamma(\frac{m-j}{2})}{\Gamma(\frac{m-j+1}{2})}= \pi^{\frac{m-3}{2}} \frac{1}{\Gamma(\frac{m-1}{2})}$$
    and 
    \begin{align*}
        & \qquad \int_0^{\pi} \sin^{m-2}(\phi_1)\exp{(\kappa \sin(\alpha) \sin(\theta) \cos(\phi_1))} d\phi_1\\
        &= \frac{\sqrt{\pi} \Gamma( \frac{m-1}{2})}{(\frac{1}{2})^{\frac{m-2}{2}}(\kappa \sin(\alpha) \sin(\theta))^{\frac{m-2}{2}}} \cdot I_{\frac{m-2}{2}}(\kappa \sin(\alpha) \sin(\theta))
    \end{align*}
 respectively. Finally, putting these together, we obtain
    \begin{align}
        f_{\Tilde{\Theta}}(\theta) &= c(\kappa) \sin^{m-1}(\theta) \exp{(\kappa\cos(\alpha)\cos(\theta))} \notag \\
        &\qquad \cdot \frac{\sqrt{\pi} \Gamma( \frac{m-1}{2})}{(\frac{1}{2})^{\frac{m-2}{2}}(\kappa \sin(\alpha) \sin(\theta))^{\frac{m-2}{2}}} \cdot I_{\frac{m-2}{2}}(\kappa \sin(\alpha) \sin(\theta)) \cdot  \pi^{\frac{m-3}{2}} \frac{1}{\Gamma(\frac{m-1}{2})} \cdot (2\pi) \notag \\
        &= \sin^{m-1}(\theta) \exp{(\kappa\cos(\alpha)\cos(\theta))} \frac{I_{\frac{m-2}{2}}(\kappa \sin(\alpha) \sin(\theta))}{(\kappa \sin(\alpha) \sin(\theta))^{\frac{m-2}{2}}} \cdot \frac{\kappa^{\frac{m-1}{2}}}{\sqrt{2\pi} \cdot I_{\frac{m-1}{2}}(\kappa)}
    \end{align}

    Now, to get $f_\Theta(\theta)$, note that
    \begin{equation}
        f_\Theta(\theta) = f_{\Tilde{\Theta}}(\theta) + f_{\Tilde{\Theta}}(\pi - \theta)
    \end{equation}

    and hence
    \begin{align*}
    f_\Theta(\theta) &= \sin^{m-1}(\theta) \exp{(\kappa\cos(\alpha)\cos(\theta))} \frac{I_{\frac{m-2}{2}}(\kappa \sin(\alpha) \sin(\theta))}{(\kappa \sin(\alpha) \sin(\theta))^{\frac{m-2}{2}}} \cdot \frac{\kappa^{\frac{m-1}{2}}}{\sqrt{2\pi} \cdot I_{\frac{m-1}{2}}(\kappa)}\\
    & \quad + \sin^{m-1}(\pi - \theta) \exp{(\kappa\cos(\alpha)\cos(\pi - \theta))} \frac{I_{\frac{m-2}{2}}(\kappa \sin(\alpha) \sin(\pi - \theta))}{(\kappa \sin(\alpha) \sin(\pi-\theta))^{\frac{m-2}{2}}} \cdot \frac{\kappa^{\frac{m-1}{2}}}{\sqrt{2\pi} \cdot I_{\frac{m-1}{2}}(\kappa)}\\
    &= \sin^{m-1}(\theta) \exp{(\kappa\cos(\alpha)\cos(\theta))} \frac{I_{\frac{m-2}{2}}(\kappa \sin(\alpha) \sin(\theta))}{(\kappa \sin(\alpha) \sin(\theta))^{\frac{m-2}{2}}} \cdot \frac{\kappa^{\frac{m-1}{2}}}{\sqrt{2\pi} \cdot I_{\frac{m-1}{2}}(\kappa)}\\
    & \quad + \sin^{m-1}(\theta) \exp{(-\kappa\cos(\alpha)\cos(\theta))} \frac{I_{\frac{m-2}{2}}(\kappa \sin(\alpha) \sin(\theta))}{(\kappa \sin(\alpha) \sin(\theta))^{\frac{m-2}{2}}} \cdot \frac{\kappa^{\frac{m-1}{2}}}{\sqrt{2\pi} \cdot I_{\frac{m-1}{2}}(\kappa)}\\
    & = 2\sin^{m-1}(\theta) \cosh{(\kappa\cos(\alpha)\cos(\theta))} \frac{I_{\frac{m-2}{2}}(\kappa \sin(\alpha) \sin(\theta))}{(\kappa \sin(\alpha) \sin(\theta))^{\frac{m-2}{2}}} \cdot \frac{\kappa^{\frac{m-1}{2}}}{\sqrt{2\pi} \cdot I_{\frac{m-1}{2}}(\kappa)}\\
\end{align*}

\end{proof}


In Theorem~\ref{thm:curv_pushforward} we consider the case of curvature estimation for $\mathcal{M} = S_r^m$ of radius $r$ in $\mathbb{R}^{m+1}$. We start with random points on $\mathcal{M}$. The proof can be found in the SI.  As we will see in Section~\ref{sec:experiments} this will help us address the bias that would occur in a naive absolute variation curvature estimate in the case of adding noise to the randomly points.

    \begin{theorem}\label{thm:curv_pushforward}
 (Noise pushforward for curvature computation) Fix a point $x$ on $S_r^m$ together with a collection of points $x_1, \dots, x_N \in S_r^m$ with a constant distance $\epsilon>0$ to $x$. Let $X_j$, $j=1,\dots, N$ be a $S^m$-valued random variable associated with the tangent space estimation at $x_j$, distributed according to the von Mises-Fisher distribution with mean of $\mu$ and concentration parameter $\kappa$. An estimation of absolute variation curvature using a discretization of formula (\ref{eq:absvc}) is expressed as a random variable as 
 $$ \frac{1}{N} \sum_{j=1}^N \Omega_{x_j}(x) = \frac{1}{N} \sum_{j=1}^N  \frac{2 \sin (\Theta(x,x_j)/2)}{\| x-x_j \|^2}$$
 and the probability distribution of $\Omega_{x_j}(x)$ is given by 
 \begin{multline}
    f_\Omega(\omega ) = \frac{\kappa^{\frac{m-1}{2}}}{\sqrt{2\pi} \cdot I_{\frac{m-1}{2}}(\kappa)} \cdot \left(\frac{2}{C}\right)^{m} \cdot \left( 1-\frac{\omega^2}{C^2}\right)^{\frac{m-2}{2}} \cdot \omega^{m-1} \\ \cdot \cosh{\left(\kappa\cos(\alpha)(1-\frac{2\omega^2}{C^2})\right)} \cdot \frac{I_{\frac{m-2}{2}}\left(\kappa \sin(\alpha)\frac{2\omega}{C} \sqrt{1-\frac{\omega^2}{C^2}}\right)}{(\kappa \sin(\alpha) \frac{2\omega}{C} \sqrt{1-\frac{\omega^2}{C^2}})^{\frac{m-2}{2}}}
    \end{multline}
    where $C = \frac{1}{r}\sqrt{\frac{2}{1 - \cos(\alpha)}}$ and $I_k$ is the modified Bessel function of the first kind.
\end{theorem}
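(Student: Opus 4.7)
\textbf{Proof strategy for Theorem~\ref{thm:curv_pushforward}.}
The plan is to reduce the claim to a one-variable change of variables applied to Theorem~\ref{thm:anglepushforward}. The first step is to isolate a clean algebraic relationship between $\Omega_{x_j}(x)$ and the noisy angle $\Theta$ studied in Theorem~\ref{thm:anglepushforward}. Because $x$ and $x_j$ both lie on $S_r^m$, their unit normals are $x/r$ and $x_j/r$, so the true angle $\alpha$ between the tangent hyperplanes is the same as the center angle on the sphere, and the chord identity gives $\|x-x_j\| = 2r\sin(\alpha/2)$. This forces $\Omega_{x_j}(x) = C\sin(\Theta/2)$ with $C = 2/\|x-x_j\| = (1/r)\sqrt{2/(1-\cos\alpha)}$, matching the constant appearing in the theorem.

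Next I would apply the univariate change of variables $\omega = C\sin(\theta/2)$, which is strictly monotone on the support $[0,\pi]$ of $\Theta$ supplied by Theorem~\ref{thm:anglepushforward}. Under this substitution $\cos(\theta/2) = \sqrt{1-\omega^2/C^2}$, and the double-angle identities convert $\sin\theta$ and $\cos\theta$ into $(2\omega/C)\sqrt{1-\omega^2/C^2}$ and $1-2\omega^2/C^2$ respectively, which are exactly the arguments appearing inside the $\cosh$ and the Bessel factor of the target density. The Jacobian $|d\theta/d\omega| = 2/(C\sqrt{1-\omega^2/C^2})$, together with the expansion $\sin^{m-1}\theta = (2\omega/C)^{m-1}(1-\omega^2/C^2)^{(m-1)/2}$, collapses into the polynomial prefactor $(2/C)^m\,\omega^{m-1}(1-\omega^2/C^2)^{(m-2)/2}$ stated in the theorem. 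The vMF normalization $\kappa^{(m-1)/2}/(\sqrt{2\pi}\,I_{(m-1)/2}(\kappa))$ carries along unchanged from Theorem~\ref{thm:anglepushforward}.

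The step I expect to require the most care is not any individual algebraic manipulation but the bookkeeping of numerical factors and the reconciliation of ranges. Because a tangent hyperplane is insensitive to the orientation of its normal, the relevant input density is the symmetrized version produced at the end of the proof of Theorem~\ref{thm:anglepushforward} (the one containing $\cosh$, which already encodes the identification $\theta\leftrightarrow\pi-\theta$), rather than the one-sided intermediate $f_{\tilde\Theta}$. Verifying that $\theta\mapsto C\sin(\theta/2)$ puts this symmetrized density in bijection with $\omega\in[0,C]$, and that no factor of two is dropped or double-counted in the transition between the angle-between-lines convention and the $[0,\pi]$ convention used in Theorem~\ref{thm:anglepushforward}, is the main obstacle. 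Once that identification is pinned down, collecting exponents in the half-angle identities delivers the stated density immediately.
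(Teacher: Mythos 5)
Your proposal is correct and follows essentially the same route as the paper's own proof: a one-variable change of variables $\omega = C\sin(\theta/2)$ applied to the symmetrized angle density of Theorem~\ref{thm:anglepushforward}, with the Jacobian $2/(C\sqrt{1-\omega^2/C^2})$ and the half-angle identities producing the stated prefactors. Your additional remarks (the chord identity pinning down $C$, and the need to use the $\cosh$-symmetrized density rather than the one-sided intermediate) are correct and in fact make the bookkeeping more explicit than the paper's terse version.
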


\begin{proof}
    Let $$g(\theta) =  \frac{2\sin(\frac{\theta}{2})}{||x_j-x||_{\mathbb{R}^{m+1}}} =  C \sin(\frac{\theta}{2}).$$
If $\omega = g(\theta)= C \sin(\theta)$ then $\theta = g^{-1}(\omega) = 2 \sin^{-1}(\omega / C)$ and 
     \begin{equation}
        \frac{d g^{-1}}{d\omega} = \frac{2}{C\sqrt{1-\frac{\omega^2}{C^2}}}
        \label{eq:inverse_cur_dev}
    \end{equation}
    the probability distribution of $\Omega_{x_j}(x)$ as a transformation of the random variable $\Theta(x,x_i)$ (which itself was a transformation of $X_j$) is given by 
\begin{align}
        f_\Omega(\omega) &= f_\Theta(2 \sin^{-1}(\frac{\omega}{C})) \cdot \left|\frac{2}{C \sqrt{1-\frac{\omega^2}{C^2}}}\right| \notag \\
        &= \frac{\kappa^{\frac{k-1}{2}}}{\sqrt{2\pi} \cdot I_{\frac{k-1}{2}}(\kappa)} \cdot \sin^{k-1}(h(\omega)) \cosh{(\kappa\cos(\alpha)\cos(h(\omega)))}  \notag \\
        &\qquad \cdot \frac{I_{\frac{k-2}{2}}(\kappa \sin(\alpha) \sin(h(\omega)))}{(\kappa \sin(\alpha) \sin(h(\omega)))^{\frac{k-2}{2}}} \cdot \frac{2}{C\sqrt{1-\frac{\omega^2}{C^2}}} \notag\\
        & = \frac{\kappa^{\frac{k-1}{2}}}{\sqrt{2\pi} \cdot I_{\frac{k-1}{2}}(\kappa)} \cdot \left(\frac{2}{C}\right)^{k} \cdot \left( 1-\frac{\omega^2}{C^2}\right)^{\frac{k-2}{2}} \cdot \omega^{k-1} \notag \\
        & \qquad \cdot \cosh{\left(\kappa\cos(\alpha)(1-\frac{2\omega^2}{C^2})\right)} \cdot \frac{I_{\frac{k-2}{2}}\left(\kappa \sin(\alpha)\frac{2\omega}{C} \sqrt{1-\frac{\omega^2}{C^2}}\right)}{(\kappa \sin(\alpha) \frac{2\omega}{C} \sqrt{1-\frac{\omega^2}{C^2}})^{\frac{k-2}{2}}}.
    \end{align}
\end{proof}

Theorem~\ref{thm:anglepushforward} produces a distribution of angles between the north pole and a random unit vector chosen according to vMF. This is  the distribution of random angles between tangent spaces, where one of them is the tangent space at the point of interest $x_0$. Given any angle, we get an estimate of the curvature at $x_0$. Thus, given the distribution of angles we get a distribution of curvature estimates. In Figure~\ref{fig:theoretical_curv}, we see how those distributions change with the dimension (denoted as $k$) and how the bias in such estimates increases with dimension (e.g., if you estimated the absolute variation curvature, the so-called ``naive curvature calculation"). It is useful to compare this with the empirical densities of Figure~\ref{fig:naive_bias}, confirming that such bias exists and increases with dimension. In the Section ~\ref{sec:experiments} we see this in our numerical experiments. Using Theorem ~\ref{thm:linearpushfwd} we will further see how we can use the pushforward to correct this bias. In Theorem ~\ref{thm:curv_pushforward}, the random variables $\Omega_{x_j}(x)$, $j=1, \dots, N$ are independent and identically distributed. Hence, estimated curvature values at $x_j, j=1, \dots, N$ leads to a sample of size $N$ from $\Omega$. The idea is to incorporate these samples to estimate the parameters of $\Omega$ encoding the true absolute variation curvature value.

\begin{figure}[H]
    \centering
    \includegraphics[width=0.6\linewidth]{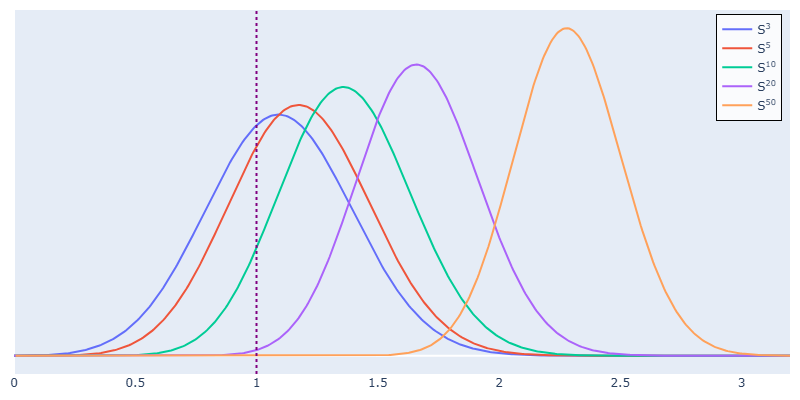}
    \caption{Theoretical probability density of naive curvature for spheres of dimension $3,5,10,20,50$. This confirms the empirical observations in Figure \ref{fig:naive_bias}.}
\end{figure}    \label{fig:theoretical_curv}

\subsection{General Noise: Mixtures of vMF}

So far, we see that if we assume a density function of  vMF for the noisy tangent space estimation, we can  construct  an explicit formula for the pushforward of the absolute variation curvature calculation. Finding such an explicit pushforward formula for some of the other probability distributions on the sphere (i.e., the codimension one case) can be addressed, using the fact that any continuous probability density function on $S^m$ can be approximated arbitrarily well (in the uniform norm) by a finite mixture of vMF distributions (see \cite{ng2020universalapproximationhypersphere}) and the linearity of our pushforward calculation. These facts are important enough that we frame them as theorems. 

\begin{theorem}\label{thm:linearpushfwd}
    Let $Y$ be the random unit vector associated to the estimated tangent space at $\mu \in S_r^m$. Asssume that $Y$ is a mixture of vMF random variables with probability density $f_Y = \sum a_j f_{\mu_j,\kappa_j}$ where $f_{\mu_j, \kappa_j}$ is the probability density of a vMF distribution with mean $\mu_j$ and scaling parameter $\kappa_j$. Then
    \begin{enumerate}
        \item The random variable $\Theta$ has the density  $ f_\Theta(\theta) = \sum_j a_j f_{\Theta_j}(\theta)$ where $ f_{\Theta_j}(\theta)$ is the distribution associated with $\mu=\mu_j$ and $\kappa=\kappa_j$ given in part 1 of Theorem \ref{thm:curv_pushforward} . 

    \item The random variable  $\Omega$ , representing curvature, has the density function $f_\Omega(\omega ) = \sum_j a_j  f_{\Omega_j}(\omega ) $ where $f_{\Omega_j}(\omega )$ is the distribution associated with $\mu=\mu_j$ and $\kappa=\kappa_j$ given in part 2 of Theorem \ref{thm:curv_pushforward}.
    \end{enumerate}
   
\end{theorem}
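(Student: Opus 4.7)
The plan is to exploit the fact that both the angle map and the curvature map are deterministic functions applied to the tangent-space random variable, and that pushforward of a measure along a deterministic map is linear in the density. Concretely, if $Y$ has density $f_Y = \sum_j a_j f_{\mu_j,\kappa_j}$, I would first reinterpret $Y$ as a hierarchical mixture: introduce a latent categorical variable $J$ with $\mathbb{P}(J=j) = a_j$, and let $Y\mid J=j$ be distributed as a vMF with parameters $(\mu_j,\kappa_j)$, call this conditional random variable $Y_j$. By construction the marginal density of $Y$ is $\sum_j a_j f_{\mu_j,\kappa_j}$, matching the hypothesis.

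Next, let $g$ denote the deterministic transformation from the unit tangent vector to the quantity of interest. For part~(1), $g(y) = \arccos(|y\cdot \mu_0|)$, producing $\Theta$. For part~(2), $g$ is the composition of this angle map with the function $\theta \mapsto 2\sin(\theta/2)/\|x-x_j\|$, producing $\Omega$. In either case, the law of total probability applied to the latent $J$ yields, for any Borel set $A$,
\begin{equation*}
\mathbb{P}(g(Y) \in A) \;=\; \sum_j a_j\, \mathbb{P}(g(Y_j)\in A),
\end{equation*}
so the density of $g(Y)$ is the corresponding convex combination $\sum_j a_j f_{g(Y_j)}$. The densities of $g(Y_j)$ are precisely the expressions furnished by Theorem~\ref{thm:anglepushforward} (for the angle map) and Theorem~\ref{thm:curv_pushforward} (for the curvature map), applied with parameters $(\mu_j,\kappa_j)$. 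Substituting these in gives the claimed mixture formulas $f_\Theta = \sum_j a_j f_{\Theta_j}$ and $f_\Omega = \sum_j a_j f_{\Omega_j}$.

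There is no genuine obstacle: the content of the theorem is the linearity of pushforward under deterministic maps together with the bookkeeping of which vMF parameters index the components. The only thing to be careful about is stating the hierarchical representation unambiguously so that the reader sees that we are not pushing forward parameters through a nonlinear map, but rather pushing forward a linear combination of densities through a single measurable function. Once that is clear, the two statements follow by invoking Theorems~\ref{thm:anglepushforward} and \ref{thm:curv_pushforward} componentwise.
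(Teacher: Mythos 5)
Your proposal is correct and is essentially the paper's own argument: the paper likewise proves both parts in one stroke by observing that the pushforward operator is linear in the measure, so $\mathcal{J}_*\bigl(\sum_j a_j \nu_j\bigr) = \sum_j a_j \mathcal{J}_*\nu_j$, and then identifies each component pushforward with the densities from Theorems~\ref{thm:anglepushforward} and~\ref{thm:curv_pushforward}. Your latent-variable phrasing via the law of total probability is just a more explicit rendering of the same linearity, so no substantive difference.
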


\begin{proof}
Given probability measures $\nu_1, \dots, \nu_m$  on $S^n$ and a map $\mathcal{J}: S^n \rightarrow \mathbb{R} $ the pushforward of the mixture $\sum a_j \nu_j$ is given by 
$$
\mathcal{J_*}\left( \sum_j a_j \nu_j \right) = \sum_j a_j \mathcal{J}_* \nu_j
$$
    using the linearity of pushforward operator. Both statemnent follow from this linearity. 
\end{proof}

We use the above statement to provide an accurate representation of the noise in tangent space estimation and as a result we obtain an accurate estimation of curvature. In our setup, we only use mixtures of the form $\sum_j a_j f_{\mu, \kappa_j}$ with fixed mean $\mu$. 



And thus, more generally:

\begin{theorem}\label{thm:arb_mixture}
    Let $Y$ be a random unit vector associated with the estimated tangent space at $\mu \in S_r^m$ with an arbitrary probability distribution function. Then the probability distributions of the corresponding $\Theta$ and $\Omega$ can be estimated by mixtures stated in Theorem \ref{thm:linearpushfwd}. 
\end{theorem}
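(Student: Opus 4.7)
The plan is to combine two ingredients: the universal approximation property of vMF mixtures on the sphere cited from \cite{ng2020universalapproximationhypersphere}, and the linearity of the pushforward operator established in Theorem~\ref{thm:linearpushfwd}. The first ingredient says that any continuous density $f_Y$ on $S^m$ admits a sequence of finite mixtures $f^{(N)} = \sum_{j=1}^{N} a_j^{(N)} f_{\mu_j^{(N)}, \kappa_j^{(N)}}$ with $\|f^{(N)} - f_Y\|_\infty \to 0$. The second ingredient gives the explicit mixture form of the pushforward of any such $f^{(N)}$ under the angle and curvature maps.

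First I would apply Theorem~\ref{thm:linearpushfwd} directly to $f^{(N)}$ and write down the resulting candidate approximations $f^{(N)}_\Theta$ and $f^{(N)}_\Omega$ as the corresponding mixtures of the single-component formulas from Theorem~\ref{thm:anglepushforward} and Theorem~\ref{thm:curv_pushforward}. Next I would show that the pushforward operator that sends a density on $S^m$ to the density of $\Theta$ (respectively $\Omega$) is continuous in a suitable norm (e.g., uniform or $L^1$), so that convergence $f^{(N)} \to f_Y$ transfers to convergence $f^{(N)}_\Theta \to f_\Theta$ and $f^{(N)}_\Omega \to f_\Omega$. From the derivation in Theorem~\ref{thm:anglepushforward}, $f_\Theta$ is obtained by integrating the input density against the Jacobian of spherical coordinates over a compact parameter domain; this makes the angle pushforward a bounded linear operator, so uniform convergence of inputs yields uniform convergence of outputs with operator norm depending only on $m$. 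The curvature pushforward composes this with the change of variables $\omega = C\sin(\theta/2)$, whose Jacobian is bounded away from any neighborhood of the boundary of the domain.

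The main obstacle is the boundary behavior at $\theta \in \{0, \pi\}$ (equivalently $\omega \in \{0, C\}$), where the $\theta \to \omega$ Jacobian $2/(C\sqrt{1-\omega^2/C^2})$ blows up. I would handle this by splitting the analysis: on a compact sub-interval bounded away from these endpoints, uniform continuity of the pushforward follows from the bounded-kernel argument above; on a shrinking neighborhood of the boundary, I would use the explicit factors $\omega^{m-1}$ and $(1-\omega^2/C^2)^{(m-2)/2}$ appearing in Theorem~\ref{thm:curv_pushforward} to show that the contribution to the total variation of both $f_\Omega$ and $f^{(N)}_\Omega$ is uniformly small in $N$, so that the densities are close in $L^1$ globally. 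With this continuity in hand, the theorem follows immediately by taking $N \to \infty$ in the mixture pushforward from Theorem~\ref{thm:linearpushfwd}.
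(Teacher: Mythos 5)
Your proposal follows exactly the route the paper intends (and only sketches in the paragraph preceding Theorem~\ref{thm:linearpushfwd}): approximate the arbitrary density by a finite vMF mixture via the universal approximation result of \cite{ng2020universalapproximationhypersphere}, push the mixture forward componentwise by Theorem~\ref{thm:linearpushfwd}, and pass to the limit. Your boundary analysis near $\theta\in\{0,\pi\}$ is correct but can be bypassed entirely: the pushforward of a probability measure under any measurable map is a contraction in total variation, so uniform (hence $L^1$) convergence of the input densities on the compact sphere immediately gives $L^1$ convergence of the densities of $\Theta$ and $\Omega$ without examining the Jacobian.
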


\subsection{General manifolds and real-world datasets}

Theorem \ref{thm:curv_pushforward} and its extensions via mixture models, in their current form, apply to the case $\mathcal{M} = S^m\subset {\mathbb{R}^{m+1}}$. However, in the more general setting of a manifold of codimension one, we still face the challenge of noisy estimation of normal vectors (or tangent spaces) and their impact on curvature computation. There are several possible ways to adapt the framework of this paper to such a setting. First, observe that Theorem \ref{thm:curv_pushforward} applies directly to pairwise curvature estimation, $\Omega_{x_j}(x)$, for any fixed neighbor $x_j$ (see Figure~\ref{fig:curve_tangents}b). This naturally leads to a notion of directional curvature for an arbitrary manifold of codimension one:  the curvature at $x$ in the direction of $x_j$.

Another possibility (for a non-sphere manifold) is to compute pairwise curvature for each $x_j$, $j = 1, \dots, N$, and then apply the statistical framework described in this paper to estimate curvature values. Although our theorem relies on certain assumptions specific to the sphere, it can still mitigate the emerging bias in curvature estimation as the dimension increases.\\

\textbf{Real-world datasets. } We conclude by discussing how the proposed theoretical framework can potentially be applied to general real-world datasets, assuming the existence of analogues of Theorem \ref{thm:curv_pushforward} in arbitrary codimensions. Given a dataset and having no prior knowledge of $\mathcal{M}$, we propose a curvature estimation in four phases. We assume use of existing methods to estimate the local dimension $m < n$ of $\mathcal{M}$ near a point of interest $x \in \mathcal{M}$. We plan to implement this computational technique in future works.

\textit{Phase 1: Density and noise characterization}. Let $U$ be a local neighborhood of $x\in \mathcal{M}$ where the data points in $U$ are approximately lying on an estimated tangent space. The distance (squared) of these points from the tangent space characterizes the variance , $\sigma^2$, of ambient noise (a larger value of this variance translates into a larger deviation of the point cloud from $\mathcal{M} \mathbb{R}^n$). An estimation of the density parameter $\rho$ of the point cloud near $x$ is given by $\# \{\textit{points in } U \} /\operatorname{Vol(U)}$.


\textit{Phase 2: Bootstrapping tangent space samples}. Given the variance and density parameters, $\sigma^2$ and $\rho$, we can simulate point clouds near a $m$-dimensional linear space $L$ in $\mathbb{R}^n$. For any such simulated point cloud we can find an estimated tangent space relative to the true tangent $L$. Repeating this bootstrap process, we can obtain samples for the probability distributions associated with tangent space estimation.  An alternative approach is to provide a theoretical derivation of this probability distributions using $\sigma^2$ and $\rho$ directly.

\textit{Phase 3: Fitting to a mixture of vMF's}. Based on bootstrap samples in phase 2, we can employ a maximum likelihood method to estimate parameters of a mixture of vMF distributions that best represent the tangent space noise distribution.

\textit{Phase 4: Estimation of curvature}. This phase is performed as previously prescribed, using a push forward with respect to the mixture of vMF's obtained in phase 3.


\section{Numerical Experiments -- The case of $\mathcal{M} = S^m_r$}\label{sec:experiments}
\subsection{Framework}
Theorems~\ref{thm:linearpushfwd} and~\ref{thm:arb_mixture} allow  us in our numerical experiments to justify the modeling of the distribution of the estimated normal vectors using a finite mixture of vMF components.  This structure enables the use of maximum likelihood estimation (MLE) to infer the parameters required for our decoded curvature estimation. We proceed as follows. \\

 We use our method to estimate curvature on a sphere from a points randomly distributed around the north pole. We succeed here without mixtures. The only randomness comes from the sampling around the north pole. We first find that if we try to estimate the curvature from our formula directly (which depends on $\kappa$), then we obtain biased estimates, with a bias that increases with dimension. We then find that we can mitigate that bias through our derivation of the pushforward of the curvature distribution.  

We then bring more randomness into the situation: We consider a model in which we randomly sample points from the sphere (uniformly sampling every coordinate and then normalizing the vector and then fixing a neighborhood about the north pole and then adding $n-$dimensional Gaussian noise to each point except the north pole). Again, we have a naive estimate of curvature (see Algorithm 2 in the SI) which we show -- empirically -- is biased. We then show how using a mixture of pushforwards -- requiring different $\kappa$, but with a fixed $\mu$ -- we can recover the true curvature. Note: it's not a priori the case that we could mitigate the bias with a small mixture of pushforwards, but we show that it is possible in several cases.

As we will see, through this process we are able to greatly mitigate the bias that is implicit in a direct (naive) calculation of the absolute variation curvature. As per our discussion in Section~\ref{sec:prob_decoding_avc}, these ideas would transfer to improvements in the estimates of other kinds of scalar-valued curvatures.

\subsection{Sampling}

We generate point clouds by uniformly sampling on hyperspheres $S^m_r$, of radius $r>0$. In the \textbf{perfect sampling} setting, points are drawn uniformly on the hypersphere. In the \textbf{noisy sampling} setting, we add $n$-dimensional i.i.d. Gaussian noise with standard deviation $\sigma$ to the uniformly sampled points. Specifically, we test 3 noise levels $\sigma \in \{0.01,0.02,0.05\}$ in noisy sampling experiments.

For each hypersphere $S^m_r$, the total number of points sampled is determined by the product of a chosen density of points and the surface area of $S^m_r$. Here, density refers to the average number of points per unit surface area. The chosen densities and resulting number of samples used in each case are summarized in Table S1 (in the SI). For instance, consider the case of $S^3_1$, whose surface area is $\frac{2\pi^2}{\Gamma(2)} = 2\pi^2 \approx 19.74$. With a chosen density of 50, the total number of points sampled is $\lceil 50 \times 19.74 \rceil = 986$. We observe that under perfect sampling, and having sufficient number of sample points, the empirical distribution of estimated normal vectors can be approximated by a single  von Mises–Fisher distribution. In a noisy sampling setup, we use higher mixtures of von Mises–Fisher distribution to model the tangent space estimation error.

\subsection{Hyperparameters}

 We assume the local dimension of the manifold is known in each case. Thus, for a dataset sampled from $S^m_r$, the local dimension is set to $m$. The neighborhood radius $\epsilon$ used for tangent space estimation is fixed at $\epsilon=0.52$ for cases with $r=1$, and $\epsilon=0.78$ for cases with $r=2$. The neighborhood radius $\epsilon'$ used for curvature estimation is chosen to be $\epsilon' = \epsilon + 0.2r$ for each $S^m_r$. \\

 With this we then compute the decoded curvature estimate (see Algorithm 3 in the SI). We use maximum likelihood estimation (MLE) 
 which requires estimates of the concentration parameters $\{\kappa_i\}$ for the  von Mises–Fisher distributions (for $S^m_r$) in our mixture model and their corresponding weights in the mixture model. These concentration parameters $\{\kappa_i\}$ and their weights in the mixture model depend solely on the noise level and neighborhood size and variance used for tangent space estimation (see Algorithm 1 in the SI), and are thus relatively stable with under the same parameter configuration ($m$, $r$, $\epsilon$, $\sigma$, density). Therefore, for each $S^m_r$ and each parameter configuration, we estimate $\{\kappa_i\}$ and the mixture weights empirically by conducting 50,000 simulations in which the tangent space is estimated at the north pole using point clouds sampled from $S^m_r$.\\

 In the \textbf{perfect sampling} case, the distribution of estimated normal vectors is well-approximated by a single vMF distribution, corresponding to a one-component mixture model. In the \textbf{noisy sampling} case, more complex distributions arise, and we use mixture models with up to four components in the most challenging scenarios.

\subsection{Results}

For each hypersphere $S^m_r$ (i.e., an $m$-sphere with radius $r$), the theoretical absolute variation curvature at any $x \in S^m_r$ is given by $\omega_x = \frac{1}{r}$. Due to the rotational symmetry of hyperspheres, curvature estimation at any point on a hypersphere can be reduced, via an isometric transformation, to curvature estimation at the north pole. Thus, all curvature estimates in our experiments are performed at the north pole without loss of generality.

We conducted numerical experiments on data generated from hyperspheres of varying dimensions and radii to evaluate the accuracy and robustness of our method under different noise levels. The results of estimated naive curvature and decoded curvature under the \textbf{perfect sampling} setting can be found in Figure S2 the SI. We observe that the naive curvature estimation is consistently biased, and the bias increases with the local dimension $m$ of the manifold. In contrast, our decoded curvature estimation remains unbiased across all tested cases. 

In Figure~\ref{fig:cur_noisy_r=2} we show the results in the \textbf{noisy sampling} setting for  $r=2$ (the case of $r=1$ can be found in Figure S3 in the SI). The noise levels added are $\sigma \in \{0.01,0.02,0.05\}$. We observed that the naive curvature estimation becomes increasingly biased as the local dimension or noise level increases. However, our decoded curvature estimation remains close to the ground truth curvature regardless. Although our decoded curvature estimates exhibit slight bias in some cases, they remain significantly closer to the theoretical value compared to the highly biased naive estimates. As discussed in a previous section, this residual bias arises from imperfect representation of the tangent space error distribution (see Figure S1 in the SI). \\

\begin{figure}[htbp]
    \centering
    \caption{Decoded Curvature (red) vs Naive Curvature (blue) for noisy point clouds near $S^m \subset \mathbb{R}^{m+1}$. Rows correspond to dimensions $m=3,5,10,12$, respectively; columns correspond to the noise levels $\sigma = 0.01,0.02,0.05$, respectively. The radius is $r=2$ in all cases. Generally, the decoded curvature is peaked around or very near the true curvature value while the naive estimates get progressively worse with increased dimension or increased noise.}
    \label{fig:cur_noisy_r=2}

    \begin{subfigure}{0.9\textwidth}
        \centering
        \legendentry{red}{Decoded Curvature \quad\quad\quad}
        \legendentry{blue}{Naive Curvature}
    \end{subfigure}

    \vspace{1em}
    
    \begin{subfigure}[b]{0.9\textwidth}
        \centering
        \includegraphics[width=\textwidth]{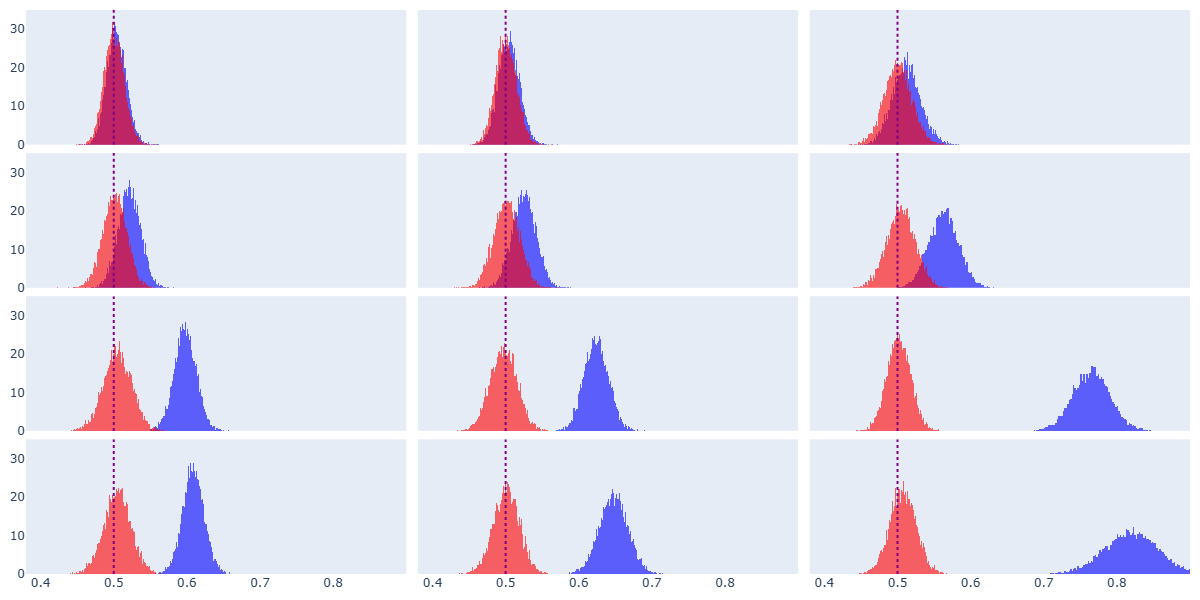}
    \end{subfigure}
\end{figure}

\clearpage

\newpage

\section{Discussion}\label{sec:Discussion}

Figure~\ref{fig:cur_noisy_r=2} as well as Figures S2 and S3 in the SI show several phenomena. First that in the naive calculation of curvature, bias increases both as noise increases and as dimension increases. The figures further show that the decoded curvature are generally highly localized around the actual curvature value, thus proving it to be a much better estimator in the noisy case. 

While as per Theorems~\ref{thm:linearpushfwd} and~\ref{thm:arb_mixture}, working with the vMF noise model is not a theoretical limitation, in practice it would be of interest to know if we directly estimate the tangent space estimation for a noisy point cloud. A natural model here is uniform sampling from the manifold with the addition of ambient Gaussian noise. 

All of our explicit calculations have been carried out in the simple case of codimension one which allows for a simplification of the tangent space representation (in this case, by the unit vector representing the orthogonal space). We reiterate that this work is applicable beyond the  reconstruction of a  sphere and the setting of manifolds of codimension one. First, note that for a submanifold of dimension $m$ in $\mathbb{R}^n$ the tangent spaces are represented by elements of the Grassmannian $\operatorname{Gr}(m,n)$, representing $m$-dimensional linear subspaces of $\mathbb{R}^n$. A coordinate system for $\operatorname{Gr}(m,n)$ is provided via the space of $n\times m$ matrices (two matrices represent the same element $\operatorname{Gr}(m,n)$ of they have the same column space). There is a matrix version of the von Mises-Fisher distribution with the density 
$$
f(x) = c(\kappa) \; e^{\kappa \operatorname{tr} \left( \mu^T x \right)}
$$
where $\kappa$ is the concentration parameter, $\mu$ and $x$ denote $n\times m$ matrices, representing elements of $\operatorname{Gr}(m,n)$. A Grassmannian-valued random variable in this setting provides a model for a noisy estimation of tangent spaces for a manifold of dimension $m$ embedded in $\mathbb{R}^n$. For a full extension of our work one needs to establish the analogs of the pushforward theorems for the curvature estimation (Theorems~\ref{thm:anglepushforward} and~\ref{thm:curv_pushforward}) by computing explicit transformation formulas for the Grassmannian-valued random variable. We postpone this question to a future project. 

Also, our explicit calculations are for absolute mean curvature. This methodology could be applied -- in principle -- to computing the probability density for  any entry in the second fundamental form given a noise model for estimating tangent spaces. Extension of our results to any one of these curvature-related scalars (or combinations thereof) would require the ability to compute the pushforward of the noisy estimate of the tangent spaces to entries of the second fundamental form. This too is an interesting future project. 




\clearpage

\newpage



\section*{Acknowledgments}
Mohammad Javad Latifi Jebelli was supported as a postdoctoral scholar from the U.S. Office of Naval Research under MURI grant N00014-19-1-242. We thank Soroush Vosoughi for helpful early discussions. 


\begin{appendices}

\section*{Shape Operator}\label{sec:shape_operator}
To each point $x \in \mathcal{M}$, we can associate a normal vector $\textbf{n}(x) \in \mathbb{R}^n$ such that $\textbf{n}(x) \perp T_x \mathcal{M}$. For a tangent vector $Y \in T_x \mathcal{M}$, the shape operator at $x \in\mathcal{M}$ is a linear map defined using the derivative of the normal vector field, $\textbf{n}$, in the direction of $Y$, i.e. 
$$\mathcal{L}_x Y = - D_{Y} \textbf{n}$$
The second fundamental form $\Pi$ is then a bilinear form on the tangent space $T_x \mathcal{M}$ 
$$
\Pi (Y , Z) = \langle \mathcal{L}_x(Y), Z \rangle
$$
the trace and determinant of $\mathcal{L}_x$ give the mean and the Gaussian curvature at $x$, respectively. Given a ($n$ by $n$) linear transformation $T$, we define $\psi(T)$ to be the mean value of  on $||Tx||$ for $x$ with $||x||=1$, more concretely 
$$
\psi(T) = \frac{1}{\operatorname{vol}(S^{n-1})} \int_{S^{n-1}} ||Tx|| \, dx
$$
it is not hard to see that $\psi(T)$ is a symmetric function of eigenvalues of $T$ and in the case of $T:\mathbb{R}^2 \rightarrow \mathbb{R}^2$, $\psi(T)$ can be expressed as an elliptic integral of second kind in $\lambda_1$ and $\lambda_2$. 

 Expressing the absolute variation curvature in terms of the eigenvalues $\lambda_1, \dots, \lambda_m$, the quantity $\psi({\mathcal{L}}_x)$ is positive and invariant under the transformation $\lambda_i \rightarrow -\lambda_i$.\\

We show that $\Omega(\cdot, \cdot)$ gives a discrete approximation of absolute variation curvature $\omega_x$.  We will focus on manifolds of codimension one. 

\begin{lemma}\label{prop:abs_mean_eq}
    Let $\mathcal{M}$ be a smooth manifold of codimension one in $\mathbb{R}^n$. The absolute variation curvature $\omega_x = \psi(\mathcal{L}_x)$ at $x \in \mathcal{M}$ can be expressed as the average of $\Omega_y(x)$ taken over all points $y$ on the boundary of an infinitesimal sphere $B_\epsilon(x) = \{x \in M: \|x-y\| = \epsilon \}$ centered at $x$. More precisely, 
    
    \begin{equation}\label{eq:mean_formula}      
    \psi(\mathcal{L}_x) = \lim_{\epsilon \rightarrow 0 } \frac{1}{|A_{\epsilon}|} \int_{A_{\epsilon}}\Omega_y(x)\, d\mu(y), \quad \quad A_{\epsilon} = \partial B_\epsilon(x)
    \end{equation}
    where $d\mu$ is the volume measure on $A_\epsilon$.
\end{lemma}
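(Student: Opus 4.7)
The plan is to Taylor-expand the normal vector field along $\mathcal{M}$ to second order, reduce $\Omega_y(x)$ to $\|\mathcal{L}_x v\|$ plus small corrections, and then recognize the resulting average over $A_\epsilon$ as the defining average for $\psi(\mathcal{L}_x)$.

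First I would set up a local Monge-type parametrization. Write $m = \dim\mathcal{M} = n-1$ and pick an orthonormal basis $e_1,\dots,e_m$ of $T_x\mathcal{M}$ together with $e_{m+1}=\textbf{n}(x)$. For each unit vector $v\in S^{m-1}\subset T_x\mathcal{M}$, the hypersurface $\mathcal{M}$ admits a local expansion
\[
y(s,v) \;=\; x + s v + \tfrac{s^2}{2}\langle \mathcal{L}_x v, v\rangle\, \textbf{n}(x) + O(s^3),
\]
from which $\|y(s,v)-x\|^2 = s^2(1+O(s^2))$, so the Euclidean sphere condition $\|y-x\|=\epsilon$ is solved by $s = \epsilon + O(\epsilon^3)$. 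Hence $A_\epsilon$ is parametrized (to leading order) by $v\in S^{m-1}$, with induced area element $d\mu = \epsilon^{m-1}(1+O(\epsilon^2))\,d\sigma(v)$ and total mass $|A_\epsilon| = \operatorname{vol}(S^{m-1})\,\epsilon^{m-1}(1+O(\epsilon^2))$.

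Next I would expand the normal field. Differentiating the identity $\textbf{n}\cdot\textbf{n}\equiv 1$ once and twice along $s\mapsto y(s,v)$ gives $\textbf{n}(x)\cdot D_v\textbf{n}=0$ (the familiar fact that $\mathcal{L}_x v = -D_v\textbf{n}$ is tangent) and $\textbf{n}(x)\cdot D_v^2\textbf{n} = -\|\mathcal{L}_x v\|^2$. Therefore
\[
\cos\theta(x,y) \;=\; \textbf{n}(x)\cdot\textbf{n}(y) \;=\; 1 - \tfrac{\epsilon^2}{2}\|\mathcal{L}_x v\|^2 + O(\epsilon^3),
\]
so $\theta(x,y) = \epsilon\|\mathcal{L}_x v\| + O(\epsilon^2)$ and $2\sin(\theta/2) = \epsilon\|\mathcal{L}_x v\| + O(\epsilon^2)$. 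Dividing by $\|x-y\|=\epsilon$ yields
\[
\Omega_y(x) \;=\; \|\mathcal{L}_x v\| + O(\epsilon),
\]
uniformly in $v\in S^{m-1}$.

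Substituting these expansions into the right-hand side of \eqref{eq:mean_formula} and using uniform $C^2$-bounds on $\textbf{n}$ over a compact neighborhood of $x$ (to justify passing to the limit under the integral), one obtains
\[
\lim_{\epsilon\to 0}\frac{1}{|A_\epsilon|}\int_{A_\epsilon}\Omega_y(x)\,d\mu(y) \;=\; \frac{1}{\operatorname{vol}(S^{m-1})}\int_{S^{m-1}}\|\mathcal{L}_x v\|\,d\sigma(v) \;=\; \psi(\mathcal{L}_x).
\]
The only mildly technical point is verifying that the $O(\epsilon)$ remainder in the integrand and the $O(\epsilon^2)$ corrections to the Jacobian are uniform in $v$; once this is checked, the limit exchange is immediate and the identity follows.
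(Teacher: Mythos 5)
Your proposal is correct and follows essentially the same route as the paper: show that $\Omega_y(x)$ converges to $\|\mathcal{L}_x v\|$ as $y\to x$ along the direction $v$, then identify the spherical average with $\psi(\mathcal{L}_x)$. The paper reaches the pointwise limit more quickly via the chord identity $2\sin(\theta/2)=\|\textbf{n}(x)-\textbf{n}(y)\|$, which turns $\Omega_y(x)$ into the difference quotient $\|\textbf{n}(x)-\textbf{n}(y)\|/\|x-y\|$, whereas you Taylor-expand $\cos\theta$ directly; your version supplies the uniformity and measure-convergence details that the paper leaves implicit.
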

\begin{proof}
 For a small value of $\epsilon = d(x,y) >0$, $\theta (x,y)$ is the angle between normal vectors $\mathbf{n}(x)$ and $\mathbf{n}(y)$, at $x$ and $y$ respectively. Hence, $\theta (x,y) = \arccos(\mathbf{n}(x)\cdot \mathbf{n}(y))$ and 
 $$
 \Omega_y(x)= \frac{2 \sin (\arccos(\textbf{n}(x)\cdot \textbf{n}(y))/2)}{|| x-y ||}
 $$
on the other hand for two unit vectors $n_1$ and $n_2$ on $n$-dimensional sphere we have $|| n_1 - n_2 || = 2 \sin (\arccos(\frac{n_1\cdot n_2}{2}) $ implying that     
   $$
 \Omega_y(x)= \frac{|| \textbf{n}(x) - \textbf{n}(y)||}{|| x-y ||}
 $$ 
 Therefore, as $\epsilon \rightarrow 0$, $y\rightarrow x$ and $\Omega_y(x)$ converges to $||\mathcal{L}_x Y ||$. The right hand side of (\ref{eq:mean_formula}) is then the average of $||\mathcal{L}_x Y ||$ over all unit vectors $Y.$
 \end{proof}

\section*{Algorithms}

Following the definition, our algorithm for computing the curvature of a point cloud includes two major steps:
\begin{enumerate}
    \item Estimate tangent space at each point;
    \item Estimate curvature using tangent spaces.
\end{enumerate}

\subsection*{Tangent Space Approximation} 
To estimate the tangent space at each point, we used local PCA. 

\begin{algorithm}[H]
\caption{Find Tangent space at x using PCA in local neighborhood of radius $\epsilon$}\label{alg:find_tangent}
\begin{algorithmic}
\State Assume the local dimension is m
\State Perform Nearest Neighbors: X $\gets$ neighbors of x with $\|\text{$X_i$-x}\| < \epsilon$

\For{$X_i$ in X}
    \State $X_i$ $\gets$ $X_i$-x
    \State $X_i$ $\gets$ $\sqrt{\exp(-5 \cdot (\| \text{$X_i$} \|^2 / \epsilon))}$
\EndFor

\State D $\gets$ diag(X)
\State B $\gets$ X$^T$ D

\State Perform SVD: U, S, V$^T$ $\gets$ svd(B)
\State $T_x \gets$ first m columns of U
\State \Return $T_x$
\end{algorithmic}
\end{algorithm}

\subsection*{Naive Curvature Estimation Algorithm}

Once the tangent spaces have been estimated at each relevant point, we compute the curvature at a given point $x$ using the procedure described in Algorithm~\ref{alg:find_cur}.

\begin{algorithm}[H]
\caption{Find Curvature for x in local neighborhood of radius $\epsilon'$}\label{alg:find_cur_naive}
\begin{algorithmic}
\State $T_x$ $\gets$ tangent space at x
\State Perform Nearest Neighbors: Y $\gets$ neighbors of x with $\epsilon < \|\text{$Y_i$-x}\| < \epsilon'$

\For{$Y_i$ in Y}
    \State $T_i$ $\gets$ estimated tangent space at $Y_i$
    \State $\omega_i$ $\gets$ $\frac{ 2 \sin(\theta(T_x,T_i)/2)}{\| T_x - T_i \|}$
\EndFor
\State $\Bar{\omega} \gets \text{mean}(\omega_i)$
\State \Return $\Bar{\omega}$
\end{algorithmic}
\end{algorithm}

To estimate curvature at $x$, we would like to ensure that neighboring points $Y_i$ are approximately equidistant from $x$ (and hence subtend approximately equal geodesic angles). In practice, we restrict attention to points within a thin spherical shell defined by $\epsilon < \|\text{$Y_i$-x}\| < \epsilon'$. Such annular selection improves the stability and geometric consistency of the curvature estimates.

\subsection*{Decoded Curvature Estimation Algorithm}

With the density function $f_\Theta$, we suggest a modified curvature computation algorithm for a more accurate estimation of curvature given that the manifold is $S^m_r \in \mathbb{R}^{m+1}$.

\begin{algorithm}[H]
\caption{Find Unbiased Curvature for x in local neighborhood of radius $\epsilon'$}\label{alg:find_cur}
\begin{algorithmic}
\State $T_x$ $\gets$ tangent space at x
\State Perform Nearest Neighbors: Y $\gets$ neighbors of x with $\epsilon < \|\text{$Y_i$-x}\| < \epsilon'$

\For{$Y_i$ in Y}
    \State $T_i$ $\gets$ estimated tangent space at $Y_i$
    \State $\theta_i$ $\gets$ $\theta(T_i,T_x)$
\EndFor
\State Perform MLE on $\{\theta_i\}$ to estimate $\alpha$ based on $f_\Theta$
\State $\Bar{d}$ $\gets$ mean($\|Y_i-x\|$)
\State $\Tilde{\omega} \gets \frac{2 \sin(\alpha/2)}{d}$
\State \Return $\Tilde{\omega}$
\end{algorithmic}
\end{algorithm}  




\newpage

\begin{figure}[h]
    \centering
    \begin{subfigure}{0.8\textwidth}
        \includegraphics[width=\linewidth]{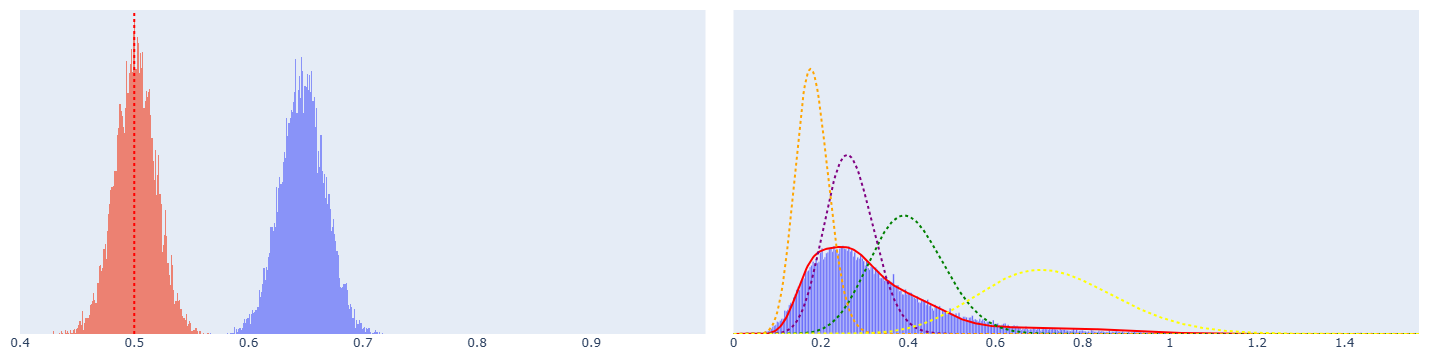}
        \caption{$S^{12}_2, \sigma = 0.02$}
    \end{subfigure}
    
    \begin{subfigure}{0.8\textwidth}

        \includegraphics[width=\linewidth]{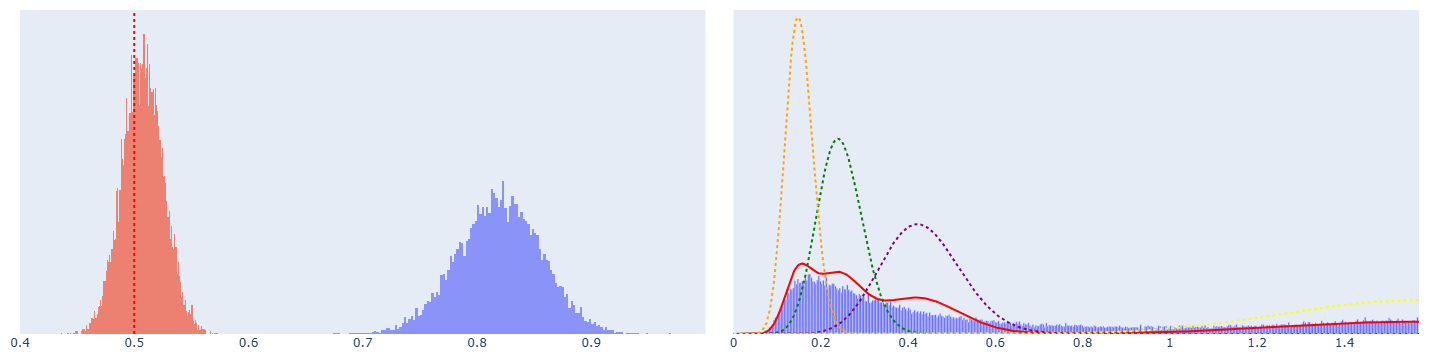}
        \caption{$S^{12}_2, \sigma = 0.05$}
    \end{subfigure}
    \caption{
        Left: decoded curvature estimates (red) and naive curvature estimates (blue) for $S^{12}$ and radius $2$, with noise levels $\sigma = 0.02, 0.05$. 
        Right: distribution of tangent space approximation errors, with empirical histogram (blue) and the MLE fit (red line) from a mixture of vMF components (dashed lines). 
        We see how in both of these high-dimensional cases the decoded estimate is strongly peaked around the correct curvature ($0.5$ for a sphere of radius $2$). The fits illustrate how the tangent space error distribution fit affects the bias in our decoded curvature estimation.
    }
\end{figure}

\begin{figure}[htbp]

    \label{fig:cur_perfect}
    
    \begin{subfigure}{0.9\textwidth}
        \begin{center}
            \legendentry{red}{Decoded Curvature \quad\quad\quad}
             \legendentry{blue}{Naive Curvature}
        \end{center}
    \end{subfigure}
    
    \centering

    \begin{subfigure}[b]{0.9\textwidth}
        \centering
        \includegraphics[width=\textwidth]{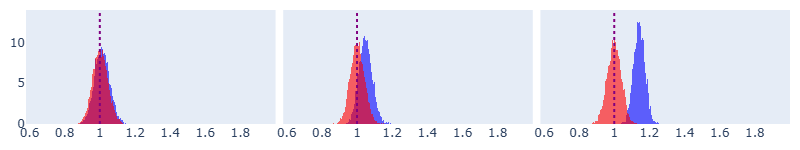}
        \caption{$r=1$}
    \end{subfigure}
    
    \begin{subfigure}[b]{0.9\textwidth}
        \centering
        \includegraphics[width=\textwidth]{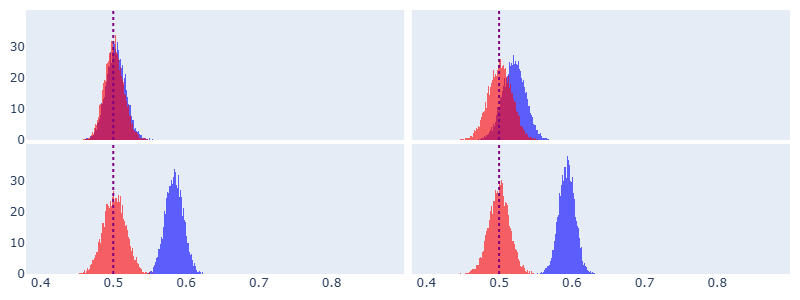}
        \caption{$r=2$}
    \end{subfigure}
    \caption{Decoded curvature versus naive curvature with perfect sampling. part (a) shows the results for $r=1$, and $m=3,5$, respectively; part (b) shows the results for $r=2$, and $m=3,5,10,12$, respectively.Generally, the decoded curvature is peaked around or very near the true curvature value while the naive estimates get progressively worse with increased dimension.}
\end{figure}

\begin{figure}[htbp]
    \centering
    \label{fig:cur_noisy_r=1}
    \begin{subfigure}{0.9\textwidth}
        \centering
        \legendentry{red}{Decoded Curvature \quad\quad\quad}
        \legendentry{blue}{Naive Curvature}
    \end{subfigure}

    \vspace{1em}
    \begin{subfigure}[b]{0.9\textwidth}
        \centering
        \includegraphics[width=\textwidth]{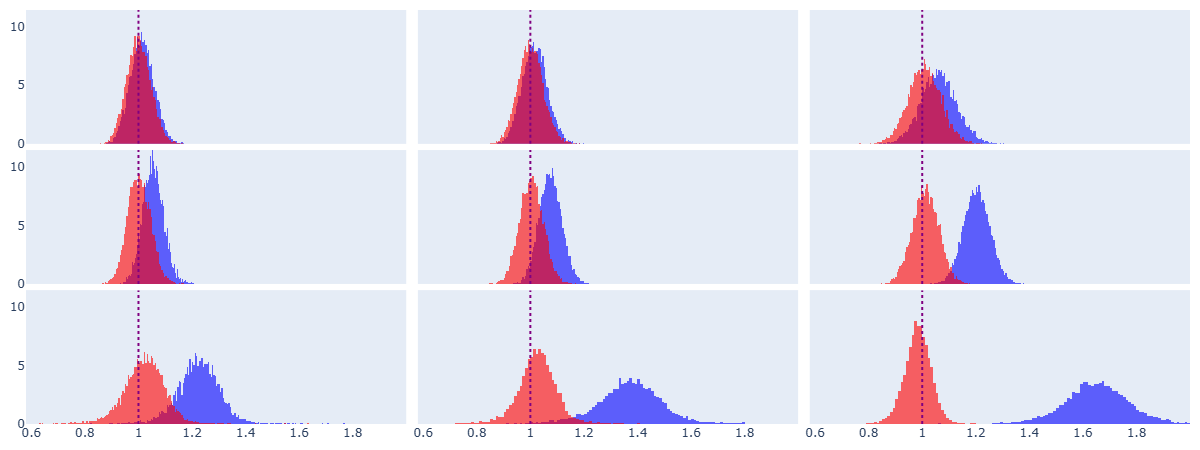}
    \end{subfigure}
    \caption{Decoded curvature versus naive curvature with noisy sampling for the case $r=1$. Rows correspond to $m=3,5$, respectively; columns correspond to $\sigma = 0.01, 0.02, 0.05$, respectively. Generally, the decoded curvature is peaked around or very near the true curvature value while the naive estimates get progressively worse with increased dimension or increased noise.}

\end{figure}

\newpage

\begin{table}[h!]
\centering
\caption{Densities and Number of Samples Used for Chosen Hyperspheres}
\begin{tabular}{ccc}
 
\textbf{Sphere} & \textbf{Density} & \textbf{Number of Samples} \\
\hline
$S^3_1$  & 50     & 1000 \\
$S^5_1$  & 200    & 6400 \\
$S^10_1$ & 100000 & 2,100,000\\
$S^3_2$  & 20     & 3,160 \\
$S^5_2$  & 25     & 24,825 \\
$S^{10}_2$ & 500   & 10,611,500 \\
$S^{12}_2$ & 3000   & 145,470,000 \\

\end{tabular}
\label{table:density_table}
\end{table}

\end{appendices}

\bibliographystyle{plain}
\bibliography{refs}

\end{document}